\theoremstyle{plain}
\newtheorem{thm}{Theorem}
\newtheorem{prop}[thm]{Proposition}
\newtheorem{conj}[thm]{Conjecture}
\newtheorem{cor}[thm]{Corollary}
\newtheorem{lem}[thm]{Lemma}
\theoremstyle{definition}
\newtheorem{defi}[thm]{Definition}
\newtheorem{rmk}[thm]{Remark}
\newtheorem*{acknowledgments}{Acknowledgments}
\newcommand{\cH}{\mathcal{H}}
\newcommand{\cF}{\mathcal{F}}
\newcommand{\cK}{\mathcal{K}}
\newcommand{\cO}{\mathcal{O}}
\newcommand{\cD}{\mathcal{D}}
\newcommand{\cZ}{\mathcal{Z}}
\newcommand{\bC}{\mathbb{C}}
\newcommand{\bT}{\mathbb{T}}
\newcommand{\bN}{\mathbb{N}}
\newcommand{\clo}[1]{\overline{#1}}
\newcommand{\Span}{\operatorname{span}}
\newcommand{\into}{\hookrightarrow}
\newcommand{\abs}[1]{\left| {#1}\right|}
\title{$\cZ$-stable Graph Algebras}
\author{Gregory Faurot}
\address{Department of Mathematics, The Ohio State University, Columbus, OH 43210}
\email{faurot.3@osu.edu}
\date{\today}
\subjclass{46L05}
\begin{document}
	
	\begin{abstract}
		We introduce a divisibility-type condition for directed graphs that is necessary for $\cZ$-stability of the corresponding graph $C^*$-algebra. We prove that this condition is sufficient if either the graph $E$ has no cycles or the algebra $C^*(E)$ has finitely many ideals. Under the further assumption that $E$ is a finite graph, we provide a complete characterization of $\cZ$-stability of $C^*(E)$. We conjecture that our divisibility condition and Condition (K) are equivalent to $\cZ$-stability of the graph algebra. We prove that it is equivalent to $C^*(E)$ being pure, verifying the Generalized Toms--Winter Conjecture for graph algebras with finitely many ideals.
	\end{abstract}
	
	\maketitle
	
	\renewcommand{\thethm}{\Alph{thm}}
	\section*{Introduction}
	As part of the Toms--Winter Conjecture, it is known that tensorial absorption of the Jiang--Su algebra $\cZ$ is equivalent to finite nuclear dimension in the case of separable, unital, simple, nuclear, infinite-dimensional $C^*$-algebras (\cite{CETWW21, CE20, T14, MS14}). Outside the simple setting, elementary subquotients -- quotients of ideals that are isomorphic to the algebra of compact operators $\cK(\cH)$ on some Hilbert space $\cH$ -- provide a clear obstruction to $\cZ$-stability (\cite{TW07}), but not to finite nuclear dimension (\cite{WZ10}). 
    It is conjectured (\cite[Question~D]{APTV24}) that the additional hypothesis of no elementary subquotients is sufficient for the equivalence of finite nuclear dimension and $\cZ$-stability to hold for non-simple $C^*$-algebras. The first results in this direction are due to Robert and Tikuisis in \cite{RT17}, which provided sufficient conditions for $\cZ$-stability in a variety of contexts. Additional results surrounding this generalized conjecture appear in \cite{BL24, BGSW22, ENST20, APTV24}.
    \par Graph $C^*$-algebras form a nice class of examples to analyze. Introduced by Kumjian, Pask, and Raeburn (\cite{KPR98}), and further generalized by Drinen and Tomforde (\cite{DT05}), any directed graph $E$ has an associated $C^*$-algebra $C^*(E)$. A primary goal in the study of graph algebras is to relate $C^*$-algebraic properties of $C^*(E)$ to combinatorial properties of the underlying graph $E$. Furthermore, due to these connections, graph algebras are a natural test case for regularity conditions like $\cZ$-stability and finite nuclear dimension. Much progress has been made in computing the nuclear dimension of graph algebras; see \cite{RST15, FC23, GT24, ENSW25, aHW25}. It is therefore important to determine which graph algebras are $\cZ$-stable.
    \par A well-known combinatorial condition on directed graphs is that of Condition (K), which precludes the existence of subquotients stably isomorphic to $C(\bT)$, the $C^*$-algebra of continuous functions on the circle. Since algebras stably isomorphic to abelian $C^*$-algebras certainly cannot be $\cZ$-stable, and $\cZ$-stability passes to ideals and quotients (\cite{TW07}), Condition~(K) is necessary for $\cZ$-stability of $C^*(E)$. It is known that all such graph algebras have nuclear dimension at most two by \cite[Theorem~A]{FC23}. However, Condition~(K) does not prevent the existence of all elementary subquotients; any graph $E$ without cycles trivially satisfies Condition~(K), but the associated (approximately finite-dimensional) graph algebra $C^*(E)$ may have an elementary subquotient or even be elementary itself. Therefore, we introduce another combinatorial condition that prevents such subquotients from appearing: \textit{distinct detours} (Definition~\ref{detourcond}). One may think of this as a divisibility condition on the vertex projections of $C^*(E)$.
    \par Under one of two additional conditions, our first result characterizes $\cZ$-stability of $C^*(E)$ in terms of the combinatorics of $E$ and the (lack of) elementary subquotients of $C^*(E)$. In the first case, when $E$ has no cycles (and so $C^*(E)$ is an AF algebra), the result is known to experts. However, the existence of distinct detours in $E$ allows us to provide a new proof of this fact using graph-theoretic techniques and \cite[Theorem~1.2]{RT17}. We then may extend this result to the setting of graph algebras with finitely many ideals by forming a composition series for $C^*(E)$. These results are summarized in the following theorem:
	\begin{thm}\label{thmA}
		Suppose $E$ is a countable, row-finite graph that satisfies one of the following conditions:
        \begin{enumerate}[label={(\alph*)}, font=\upshape]
            \item $E$ has no cycles. \label{A1}
            \item $C^*(E)$ has finitely many ideals. \label{A2}
        \end{enumerate}
        Then the following are equivalent:
        \begin{enumerate}
            \item $E$ has distinct detours.\label{thmAdetours}
            \item $C^*(E)$ has no elementary subquotients.\label{thmAsubq}
            \item $C^*(E)\otimes \cZ \cong C^*(E)$.\label{thmAjiangsu}
        \end{enumerate}
	\end{thm}
    \noindent Note that, in either case, $E$ satisfies Condition~(K); trivially in case~\ref{A1} since $E$ has no cycles, and in case~\ref{A2} as $C^*(E)$ cannot have a $C(\bT)$ subquotient.
    Due to the restrictive nature of finite graphs, the condition on distinct detours in $E$ is equivalent to $E$ having no sources. As such, Theorem~\ref{thmA} reduces to the following statement:
    \begin{thm}\label{thmB}
	Let $E$ be a finite graph. Then the following are equivalent:
	\begin{enumerate}
		\item $E$ has Condition (K) and no sources. \label{thmBgraph}
		\item $C^*(E)$ has no elementary subquotients. \label{thmBsubq}
		\item $C^*(E)\otimes \cZ \cong C^*(E)$. \label{thmBjiangsu}
		\item $C^*(E)\otimes \cO_\infty \cong C^*(E)$. \label{thmBpureinf}
	\end{enumerate}
\end{thm}

We conjecture (Conjecture~\ref{zconj}) that $E$ having Condition~(K) and distinct detours is equivalent to $\cZ$-stability of $C^*(E)$. In particular, this would imply that finite nuclear dimension and $\cZ$-stability are equivalent for graph algebras without elementary subquotients. The following theorem, proved using the results of \cite{APTV24, TV23, TV24}, suggests that these are the correct combinatorial conditions for $\cZ$-stability. Furthermore, combined with \cite[Theorem~A]{FC23}, this verifies the Generalized Toms--Winter Conjecture (Conjecture~\ref{GTWC}, c.f. \cite[Question~D]{APTV24}) in the cases \ref{A1} and \ref{A2} of Theorem~\ref{thmA}.

    \begin{thm}\label{thmC}
        Let $E$ be a countable row-finite graph. Then the following are equivalent:
        \begin{enumerate}
            \item $E$ has Condition (K) and distinct detours. \label{thmCgraph}
            \item $C^*(E)$ has no elementary subquotients. \label{thmCsubq}
            \item $C^*(E)$ is pure. \label{thmCpure}
        \end{enumerate}
    \end{thm}
In Section~\ref{prelim}, we cover the necessary background on graph algebras and $\cZ$-stability. We proceed in Section~\ref{afgraph} to develop the notion of distinct detours and use this to prove Theorem~\ref{thmA}, case~\ref{A1}, using a technique of Schafhauser \cite[Proposition~6.1]{S15}. Finally, we use these results to prove case~\ref{A2} of Theorem~\ref{thmA} and Theorem~\ref{thmB} in Section~\ref{finideals}, and Theorem~\ref{thmC} in Section~\ref{puregraph}.
    
	\setcounter{thm}{0}
	\numberwithin{thm}{section}
	
	\begin{acknowledgments}
		The author would like to thank Christopher Schafhauser for many enlightening discussions about graph algebras and $\cZ$-stability. The author also thanks the anonymous referee for their helpful suggestions.
	\end{acknowledgments}

    \section{Preliminaries}\label{prelim}
    We begin by recalling some details of strongly self-absorbing $C^*$-algebras. Two $\ast$-homomorphisms $\phi,\psi \colon A \to B$ are \textit{approximately unitarily equivalent} (written $\phi \approx_u \psi$) if there is a sequence of unitaries $(u_n)$ in the multiplier algebra $M(B)$ so that $\lim_{n \to \infty}u_n^* \phi(a)u_n=\psi(a)$ for all $a \in A$. A unital $C^*$-algebra $\cD$ is \textit{strongly self-absorbing} when the first factor embedding of $\cD$ into $\cD \otimes_{\text{min}}\cD$ is approximately unitarily equivalent to an isomorphism, made precise in the following definition:
    \begin{defi}[{\cite[Definition~1.3(iv)]{TW07}}]
        A separable, unital $C^*$-algebra $\cD$ is strongly self-absorbing if $\cD \ncong \bC$ and there is an isomorphism $\phi\colon \cD \to \cD \otimes_{\text{min}} \cD$ satisfying $\phi \approx_u \text{id}_\cD \otimes 1_\cD$.
    \end{defi}
    Strong self-absorption is a very restrictive property on a $C^*$-algebra. By \cite[Theorem~1.7]{TW07}, any such $C^*$-algebra is simple and nuclear (so we may write $\otimes$ for the tensor product unambiguously) and is either purely infinite or stably finite with a unique tracial state (\cite[Theorem~1.8]{TW07}). In the presence of the Universal Coefficient Theorem of Rosenberg and Schochet (\cite{RS87}), the only strongly self-absorbing $C^*$-algebras are UHF-algebras of infinite type, the Cuntz algebras $\cO_2$ and $\cO_\infty$, the Jiang--Su algebra $\cZ$, and tensor products of $\cO_\infty$ with a UHF algebra of infinite type (\cite[Corollary~5.2, Corollary~5.7]{TW07}. 
    \par Important families of $C^*$-algebras are those which are \textit{$\cD$-stable} for some fixed strongly self-absorbing $C^*$-algebra $\cD$. A $C^*$-algebra $A$ is $\cD$-stable when $A \cong A\otimes \cD$ (\cite{TW07}). Importantly, these isomorphisms are always approximately unitarily equivalent to the first factor embedding of $A$ into $A \otimes \cD$ (\cite[Theorem~2.2]{TW07}), mirroring the isomorphism of $\cD$ and $\cD \otimes \cD$.
    \par Central sequence algebras are of great importance for understanding $\cD$-stability. We remind the reader of their definition. First, fix a free ultrafilter $\omega$ on $\bN$. For a unital $C^*$-algebra $A$, the algebra $A_\omega$ is defined to be the quotient of the $C^*$-algebra $\ell^\omega(A)$ of bounded sequences by the ideal $c_\omega(A)$ of sequences converging to zero along the ultrafilter $\omega$. Recall that $A \into A_\omega$ as constant sequences. The \textit{central sequence algebra} of $A$ is $A_\omega \cap A'$.
    \begin{thm}[{\cite[Theorem~2.2]{TW07}}] 
        Let $\cD$ be a strongly self-absorbing $C^*$-algebra, and let $A$ be a unital, separable $C^*$-algebra. Then $A$ is $\cD$-stable if and only if there is a unital embedding $\cD \into A_\omega \cap A'$.
    \end{thm}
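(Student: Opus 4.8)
The statement is a biconditional, so I would prove the two implications separately; the forward implication is essentially formal, while the reverse is the substantial one that genuinely uses strong self-absorption.

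For the forward direction, assume $A \cong A \otimes \cD$. The first observation is that strong self-absorption forces $\cD \cong \cD^{\otimes\infty} = \bigotimes_{n=1}^\infty \cD$, obtained by iterating the isomorphism $\cD \cong \cD \otimes \cD$ and intertwining it against the first-factor embeddings, which are approximately inner by the very definition of strong self-absorption. Hence $A \cong A \otimes \cD^{\otimes\infty}$, and I identify $A$ with this infinite tensor product. Writing $\iota_n \colon \cD \to A$ for the unital embedding onto the $n$-th leg, each $\iota_n(\cD)$ commutes exactly with the legs up to $n-1$. Fixing a countable dense subset $\{a_k\}$ of $A$, every $a_k$ is approximated arbitrarily well by an element supported on finitely many legs, so $\norm{[\iota_n(d), a_k]} \to 0$ as $n \to \infty$ for each $d \in \cD$. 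By density and boundedness this gives asymptotic commutation with all of $A$, so $d \mapsto [(\iota_n(d))_n]$ is a well-defined unital $*$-homomorphism $\cD \to A_\omega \cap A'$; since each $\iota_n$ is isometric it is isometric, hence injective, and we have the desired embedding.

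For the reverse direction, assume a unital embedding $\varphi \colon \cD \to A_\omega \cap A'$. Because $\cD$ is separable and nuclear and $A$ is separable, I can lift $\varphi$ (via Choi--Effros and a diagonal argument over increasing exhaustions of $\cD$ and $A$) to a sequence of unital completely positive maps $\psi_n \colon \cD \to A$ that are asymptotically multiplicative and asymptotically central, i.e. $\norm{\psi_n(de) - \psi_n(d)\psi_n(e)} \to 0$ and $\norm{[\psi_n(d), a]} \to 0$ for all $d, e \in \cD$ and $a \in A$. Since $\cD$ is nuclear, $A \otimes \cD = A \otimes_{\mathrm{min}}\cD$, and the asymptotic commutation of $A$ with $\psi_n(\cD)$ lets me assemble asymptotic homomorphisms $\Theta_n \colon A \otimes \cD \to A$ determined by $a \otimes d \mapsto a\,\psi_n(d)$. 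With $\iota \colon A \to A \otimes \cD$ the first-factor embedding, one has $\Theta_n \circ \iota = \mathrm{id}_A$ on the nose. The plan is then to run Elliott's approximate intertwining on the pair $(\iota, \Theta_n)$: together with the reverse estimate $\iota \circ \Theta_n \approx_u \mathrm{id}_{A\otimes\cD}$, this produces a genuine isomorphism $A \cong A \otimes \cD$.

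The crux, and the step I expect to be the main obstacle, is establishing $\iota \circ \Theta_n \approx_u \mathrm{id}_{A \otimes \cD}$. Concretely, $\iota \circ \Theta_n$ sends $1 \otimes d$ to $\psi_n(d) \otimes 1$, an asymptotically central copy of $\cD$ in $A \otimes \cD$, whereas the identity keeps $1 \otimes d$; I must produce unitaries conjugating one family to the other while asymptotically fixing $A \otimes 1$. This is precisely where strong self-absorption of $\cD$ becomes indispensable: it yields that the flip automorphism on $\cD \otimes \cD$ is approximately inner, and hence that any two unital $*$-homomorphisms $\cD \to B_\omega \cap B'$ are approximately unitarily equivalent. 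Applying this uniqueness with $B = A \otimes \cD$ — after invoking $\cD \cong \cD \otimes \cD$ to create the extra tensor leg in which to rotate — delivers the required unitaries and closes the intertwining. I would therefore isolate \emph{uniqueness of unital central embeddings of a strongly self-absorbing algebra up to approximate unitary equivalence} as the key lemma to prove first, since every nontrivial feature of the reverse implication is funneled through it.
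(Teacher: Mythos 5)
First, a framing remark: the paper does not prove this statement at all --- it is quoted as background from \cite[Theorem~2.2]{TW07}, whose proof is in turn R{\o}rdam's one-sided Elliott intertwining argument in \cite[Section~7.2]{RS02}. So your proposal can only be measured against that standard argument. Your forward direction is exactly the standard one and is correct: $\cD \cong \cD^{\otimes\infty}$, the far tensor legs give asymptotically central unital copies of $\cD$, and the induced map $\cD \to A_\omega \cap A'$ is a unital isometric $*$-homomorphism. The skeleton of your reverse direction --- Choi--Effros lifting of $\varphi$ to asymptotically multiplicative, asymptotically central u.c.p.\ maps $\psi_n \colon \cD \to A$, followed by a one-sided Elliott intertwining against the first-factor embedding $\iota \colon A \to A \otimes \cD$ --- is also the standard skeleton.

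However, there is a genuine gap at precisely the step you single out as the crux. The two maps you must compare inside $(A\otimes\cD)_\omega$ are $d \mapsto 1_A \otimes d$ and $d \mapsto$ the class of $(\psi_n(d)\otimes 1_\cD)_n$. The second is indeed a unital embedding of $\cD$ into the central sequence algebra of $B = A\otimes\cD$, but the first is \emph{not}: $1\otimes d$ does not commute with $1 \otimes \cD$. So your key lemma (``any two unital $*$-homomorphisms $\cD \to B_\omega \cap B'$ are approximately unitarily equivalent''), even if granted, says nothing about this pair; and the proposed repair --- invoking $\cD \cong \cD\otimes\cD$ to ``create an extra leg to rotate in'' --- does not produce the right unitaries either, since rotating $1 \otimes d$ into a far tensor leg conjugates $A \otimes \cD$ into the subalgebra generated by $A \otimes 1$ and that far leg, not into $\iota(A) = A \otimes 1$, which is what the one-sided intertwining hypothesis demands. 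What actually closes the argument (and is the heart of the standard proof) is the approximately inner \emph{half} flip applied to the commuting pair: the two maps above have exactly commuting ranges, hence by nuclearity of $\cD$ they induce a single $*$-homomorphism $\cD \otimes \cD \to (A\otimes\cD)_\omega$ whose image commutes with $A \otimes 1$; pushing the half-flip unitaries of $\cD\otimes\cD$ through this map yields unitaries $v_k$ commuting with $A\otimes 1$ such that $v_k(1\otimes d)v_k^*$ converges to the class of $(\psi_n(d)\otimes 1)_n$. Because that target embedding takes values in sequences from $A \otimes 1_\cD$, one gets that $v_k(a\otimes d)v_k^*$ is asymptotically a sequence from $A \otimes 1$, i.e.\ finite subsets of $A\otimes\cD$ are conjugated, after lifting the $v_k$ to unitaries of $A\otimes\cD$, into small neighborhoods of $\iota(A)$ --- exactly the intertwining hypothesis. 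Your uniqueness-of-central-embeddings lemma is thus aimed at the wrong pair of maps, and proving it would itself require this same half-flip machinery. A secondary, fixable point: for fixed $n$ the formula $a \otimes d \mapsto a\psi_n(d)$ need not define a bounded map on $A \otimes_{\mathrm{min}} \cD$ (multiplication is not bounded on minimal tensor products), so the $\Theta_n$ should be replaced by the single $*$-homomorphism $A \otimes \cD \to A_\omega$, $a \otimes d \mapsto a\varphi(d)$, which is well defined by commuting ranges and nuclearity of $\cD$; the intertwining then needs only the unitaries, never the maps $\Theta_n$ themselves.
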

    $\cD$-stability also enjoys many nice permanence properties, as illustrated in the following theorems of Toms and Winter.
    \begin{thm}[{\cite[Corollaries~3.1-4, Theorem~4.3]{TW07}}]\label{dstabperm}
    Let $A, B, I$, and $ (A_n)_{n \in \bN}$ be separable $C^*$-algebras.
        \begin{enumerate}
            \item If $A$ is a hereditary subalgebra of $B$, and $B$ is $\cD$-stable, then $A$ is $\cD$-stable.\label{dstabpermher}
            \item If $I\trianglelefteq A$, then $A$ is $\cD$-stable if and only if $I$ and $A/I$ are $\cD$-stable.\label{dstabpermideal}
            \item $A$ is $\cD$-stable if and only if $A \otimes M_n$ is $\cD$-stable if and only if $A \otimes \cK$ is $\cD$-stable.\label{dstabpermstable}
            \item If $A=\varinjlim A_n$ and $(A_n)_{n \in \bN}$ are $\cD$-stable, then $A$ is $\cD$-stable.\label{dstabpermlim}
        \end{enumerate}
    \end{thm}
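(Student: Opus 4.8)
The plan is to treat the central sequence characterization (Theorem~2.2) as the engine for all four statements, reducing each to a manipulation of the central sequence algebra. Since the permanence properties are asserted for arbitrary separable (possibly non-unital) $C^*$-algebras while Theorem~2.2 is phrased for unital ones, the first step is to record the non-unital reformulation: for separable $A$, being $\cD$-stable is equivalent to the existence of a unital $\ast$-homomorphism $\cD \to F(A)$, where $F(A) = (A_\omega \cap A')/\operatorname{Ann}(A,A_\omega)$ is the corrected central sequence algebra and $\operatorname{Ann}(A,A_\omega)$ is the annihilator of $A$ in $A_\omega \cap A'$. The unital statement is the case in which this annihilator vanishes. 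Granting this, every part becomes a question about producing a unital embedding $\cD \into F(\cdot)$.

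I would begin with (iii), since (i) will depend on it. The forward implications are formal: if $A \cong A \otimes \cD$ then, by associativity and commutativity of the minimal tensor product, $A \otimes M_n \cong (A \otimes M_n)\otimes \cD$ and $A \otimes \cK \cong (A \otimes \cK)\otimes\cD$. For the converses, the essential input is the stabilization isomorphism $F(A) \cong F(A \otimes \cK) \cong F(A \otimes M_n)$: an approximate unit together with the matrix units of $\cK$ lets one compress an approximately central sequence in $A \otimes \cK$ to one in $A$, and conversely amplify, with the discrepancy absorbed into the annihilator. Transporting a unital copy of $\cD$ across this isomorphism settles (iii). Next I would do (ii). The two forward implications are the accessible ones: for the quotient, the surjection $A \to A/I$ induces a unital $\ast$-homomorphism $F(A) \to F(A/I)$, so a unital copy of $\cD$ in $F(A)$ descends to one in $F(A/I)$; for the ideal, choosing a quasicentral approximate unit of $I$ in $A$ lets one compress a unital copy of $\cD$ from $F(A)$ into $F(I)$.

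The converse in (ii) -- if $I$ and $A/I$ are both $\cD$-stable then so is $A$ -- is the crux of the whole theorem and where I expect the real difficulty. Here one has unital embeddings $\cD \to F(A/I)$ and $\cD \to F(I)$ and must glue them into a unital embedding $\cD \to F(A)$. The strategy is to lift the $\cD$-structure from $F(A/I)$ to an approximately central, approximately multiplicative map into $A$ whose multiplicative and centrality defects take values in $I$, and then to use the $\cD$-structure already present in $I$ to correct those defects. The strong self-absorption $\cD \cong \cD \otimes \cD$ provides the room needed to iterate the correction in an Elliott-type approximate intertwining, converging to an honest unital $\ast$-homomorphism $\cD \to F(A)$.

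With (ii) and (iii) in hand, (i) follows quickly. Given a hereditary subalgebra $A \subseteq B$ with $B$ $\cD$-stable, let $I = \clo{BAB}$ be the ideal it generates; then $A$ is a full hereditary subalgebra of the separable algebra $I$, and $I$ is $\cD$-stable by (ii). Brown's stabilization theorem gives $A \otimes \cK \cong I \otimes \cK$, which is $\cD$-stable by (iii), and a second application of (iii) yields that $A$ itself is $\cD$-stable. Finally, for (iv) I would argue directly from the characterization: with $A = \varinjlim A_n$ and unital embeddings $\cD \to F(A_n)$, lift each to a sequence of approximately central copies of $\cD$ in $A_n$, push these forward into $A$ along $\phi_{n,\infty}$, and invoke the density of $\bigcup_n \phi_{n,\infty}(A_n)$ together with a Kirchberg--R{\o}rdam reindexing argument to extract a single unital $\ast$-homomorphism $\cD \to F(A)$. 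The main obstacle across the whole theorem is the extension result in (ii), as the lifting and gluing of the two $\cD$-structures is the only step that genuinely exploits the self-absorption of $\cD$ rather than formal properties of tensor products and central sequences.
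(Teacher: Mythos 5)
The paper itself contains no proof of this statement: it is quoted as background, with the proofs living in \cite[Corollaries~3.1--3.4, Theorem~4.3]{TW07}, so your proposal can only be measured against the arguments there. Much of your architecture is sound and close in spirit to theirs: the forward direction of (iii) is formal; deriving (i) from the ideal case of (ii) together with Brown's theorem and (iii) is a legitimate (and standard) route; (iv) does come down to density plus a reindexing argument; and the compression arguments for ideals and hereditary subalgebras do go through, the saving observation being that multiplicativity modulo the annihilator only has to be tested against constant sequences, which commute exactly with everything in sight.

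The genuine gap is the converse of (ii), which you rightly call the crux but whose proposed resolution would fail. Your plan is to lift the copy of $\cD$ from the central sequence algebra of $A/I$ and then ``use the $\cD$-structure already present in $I$ to correct the defects,'' iterating via an Elliott-type intertwining. The natural implementation of this correction is a patching: choose a quasicentral approximate unit $h$ of $I$ in $A$ (viewed in $A_\omega \cap A'$), a u.c.p.\ lift $\psi$ of the map $\cD \to F(A/I)$, a copy $\theta$ of $\cD$ in the central sequences of $I$, and set $\Phi(d)=(1-h)^{1/2}\psi(d)(1-h)^{1/2}+h^{1/2}\theta(d)h^{1/2}$. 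But $\Phi$ is not even approximately multiplicative: cross terms such as $(1-h)^{1/2}\psi(d)(1-h)^{1/2}h^{1/2}\theta(e)h^{1/2}$ have norm of order one, since $h$ cannot be taken close to a projection, and no convex patching of two $\ast$-homomorphisms is multiplicative --- so there is nothing for $\cD \cong \cD \otimes \cD$ to ``iterate on.'' What Toms--Winter actually do in Theorem~4.3 is work with the pair of extensions $0 \to I \to A \to A/I \to 0$ and $0 \to I\otimes\cD \to A\otimes\cD \to (A/I)\otimes\cD \to 0$ and build a two-sided approximate intertwining out of \emph{unitaries}; the crux is that the unitaries witnessing $\cD$-stability on the quotient side must be lifted along the surjection onto $(A/I)\otimes\cD$, and only unitaries in the connected component of the identity lift. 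This is exactly why \cite[Theorem~4.3]{TW07} carries the hypothesis that $\cD$ is $K_1$-injective --- a hypothesis one can suppress today only because of Winter's later theorem that strongly self-absorbing algebras are $\cZ$-stable, hence $K_1$-injective. Your sketch contains no mechanism playing the role of this unitary-lifting step, and ``strong self-absorption provides the room'' does not address the obstruction. Two further points you use silently are themselves nontrivial: the isomorphism $F(A)\cong F(A\otimes\cK)$ invoked for (iii) is a theorem of Kirchberg rather than a formal compression/amplification, and both the surjectivity of $F(A)\to F(A/I)$ and the existence of a copy of $\cD$ in $(I_\omega\cap A')/\operatorname{Ann}(I)$ --- rather than merely in $(I_\omega\cap I')/\operatorname{Ann}(I)$ --- require quasicentral approximate unit or $\sigma$-ideal arguments.
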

    Of particular relevance to this paper is when $\cD=\cZ$, the Jiang--Su algebra. The Jiang--Su algebra $\cZ$ is a simple, unital, projectionless, nuclear $C^*$-algebra. We refer the reader to \cite{JS99} and \cite{S22} for details on the construction of $\cZ$.
    \par Nuclear dimension (\cite{WZ10}) is an important $C^*$-regularity condition and is a non-commutative analogue of topological covering dimension. We will not need the technicalities of nuclear dimension to prove our results, but its relationship with $\cZ$-stability is both a motivation for this work as well as important to the proof of Theorem~\ref{thmA}, case~\ref{A1}. The following conjecture illustrates this relationship.
    
    \begin{conj}[Toms--Winter Conjecture]
         Let $A$ be a separable, simple, nuclear, unital, infinite-dimensional $C^*$-algebra. Then the following are equivalent:
         \begin{enumerate}
             \item $A$ has finite nuclear dimension. \label{TW1}
             \item $A$ is $\cZ$-stable. \label{TW2}
             \item \label{TW3}$A$ has strict comparison. (see \cite{B88}, c.f. \cite{R04})
         \end{enumerate}
    \end{conj}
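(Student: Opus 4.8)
The plan is to recognize this as the Toms--Winter conjecture in its classical (separable, simple, unital, nuclear, infinite-dimensional) setting and to organize the argument as the cycle $(1)\Rightarrow(2)\Rightarrow(3)\Rightarrow(1)$, while recalling at the outset that the difficulty is highly asymmetric: the implications $(1)\Rightarrow(2)$ and $(2)\Rightarrow(3)$ are theorems, whereas the passage from comparison back to $\cZ$-stability is the genuinely hard, and in general still open, part.

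For $(2)\Rightarrow(3)$ I would follow R\o rdam: $\cZ$-stability forces the Cuntz semigroup $\Cu{A}$ to be almost unperforated, and almost unperforation is precisely what yields comparison of positive elements by tracial functionals. This is a Cuntz-semigroup computation performed once $A\cong A\otimes\cZ$ is assumed, using that $\cZ$ already enjoys the comparison behaviour it then imposes on any absorbing algebra.

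For $(1)\Rightarrow(2)$ I would invoke the chain of results in \cite{MS14, T14, CE20, CETWW21}: the strategy is to use finiteness of $\ndim{A}$ to build approximately central order-zero maps out of matrix algebras, establish property (SI), and then glue these into a unital embedding of $\cZ$ into $A_\omega\cap A'$, which by \cite[Theorem~2.2]{TW07} is equivalent to $\cZ$-stability. The sharpening of the converse $(2)\Rightarrow(1)$ to the optimal bound $\ndim{A}\le 1$ is \cite{CETWW21}, obtained by assembling $\cZ$-absorption into finite-dimensional approximations with a controlled number of colours.

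The main obstacle is the remaining implication $(3)\Rightarrow(2)$, strict comparison $\Rightarrow\cZ$-stability, the one direction \emph{not} available in full generality. The natural route is once more the Matui--Sato machinery: extract property (SI) from strict comparison and manufacture the central embedding of $\cZ$. This succeeds under restrictions on the tracial simplex---finitely many extremal traces, or a trace space of finite topological covering dimension---but producing (SI) from comparison for an arbitrary trace simplex is exactly where the argument stalls. I therefore expect this third implication to be the crux. In a paper whose target is the graph-algebra case it is legitimate to cite the general equivalence where it is known and to exploit the rigidity of the trace structure of $C^*(E)$, rather than to resolve $(3)\Rightarrow(2)$ unconditionally.
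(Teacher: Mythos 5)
This statement is a conjecture, and the paper offers no proof of it; it only records (immediately after the statement) that \ref{TW1} and \ref{TW2} are equivalent and imply \ref{TW3} by \cite{CETWW21, CE20, W12, T14, MS14}, leaving \ref{TW3} $\Rightarrow$ \ref{TW2} open. Your account matches this exactly --- same known implications, same references, and a correct identification of strict comparison $\Rightarrow$ $\cZ$-stability as the genuinely open direction --- so it is consistent with the paper's treatment.
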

    \noindent By the results of \cite{CETWW21, CE20, W12, T14}, utilizing the work of \cite{MS14}, it is known that \ref{TW1} and \ref{TW2} are equivalent, and they both imply \ref{TW3}. Outside of the simple setting, the equivalence of the first two conditions no longer holds (an easy counterexample is $C(\bT)$). The proposed additional hypothesis is that of \textit{no elementary subquotients}. By \textit{subquotient} we mean a quotient of an ideal. A $C^*$-algebra is \textit{elementary} if it is isomorphic to $\cK(\cH)$ for some Hilbert space $\cH$.
    \begin{conj}[{Generalized Toms--Winter Conjecture (\cite[Question~D]{APTV24})}]\label{GTWC}
        Let $A$ be a separable, unital, nuclear $C^*$-algebra with no elementary subquotients. Then the following are equivalent:
        \begin{enumerate}
            \item $A$ has finite nuclear dimension.
            \item $A$ is $\cZ$-stable.
            \item $A$ is pure.
        \end{enumerate}
    \end{conj}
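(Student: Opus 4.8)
The plan is to prove the three-way equivalence as a cycle of implications, using the fully resolved simple case of the Toms--Winter conjecture as the engine and propagating regularity through the ideal structure of $A$. The hypothesis that $A$ has no elementary subquotients enters immediately: a simple subquotient of $A$ can never be isomorphic to some $\cK(\cH)$, so in particular it is infinite-dimensional, and the classical equivalence of \cite{CETWW21, CE20, W12, T14, MS14} applies to each simple building block. The overall strategy is therefore to verify the implications that follow from known comparison and permanence theory, to reduce the remaining direction to these simple subquotients, and to reassemble the global statement using the permanence properties of $\cZ$-stability from \cite{TW07}.

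For the directions I expect to be accessible, consider first (2) $\Rightarrow$ (3). Recall that purity means $\Cu{A}$ is both almost unperforated and almost divisible. R{\o}rdam's theorem gives that any $\cZ$-absorbing $C^*$-algebra has almost unperforated Cuntz semigroup (\cite{R04}), while the divisibility of $\cZ$ forces almost divisibility after tensoring; neither argument uses simplicity, so (2) $\Rightarrow$ (3) holds for general $A$. For (1) $\Rightarrow$ (2), finite nuclear dimension, which passes to subquotients, yields $\cZ$-stability on each simple subquotient by the simple Toms--Winter theorem (\cite{W12, CETWW21}). I would then ascend a composition series $0 = I_0 \trianglelefteq I_1 \trianglelefteq \cdots$ of $A$ with simple subquotients and invoke \cite[Corollaries~3.1--4]{TW07}, namely that $\cZ$-stability of $A$ is equivalent to that of an ideal together with the corresponding quotient and is preserved under inductive limits, to lift $\cZ$-stability from the pieces to $A$ itself.

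The crux is the direction (3) $\Rightarrow$ (2) (or (3) $\Rightarrow$ (1)): deducing a concrete regularity property from the abstract Cuntz-semigroup condition of purity. Here I would work through the central-sequence characterization \cite[Theorem~2.2]{TW07}, for which it suffices to produce a unital embedding $\cZ \into A_\omega \cap A'$. The plan is to extract, from almost unperforation and almost divisibility of $\Cu{A}$, the approximately central systems of order-zero maps needed to assemble such an embedding, following the Cuntz-semigroup machinery of \cite{TV23, TV24, APTV24}; purity supplies exactly the comparison and divisibility data that these constructions consume.

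I expect the genuine obstacle to be this final step in full generality. Purity is a global, possibly non-localizing condition on $\Cu{A}$, and converting it into central-sequence data while simultaneously controlling every subquotient is precisely the open heart of the conjecture; this is why the statement is posed as a conjecture rather than a theorem. In the setting of the present paper the obstacle is sidestepped: for a graph algebra $C^*(E)$ the ideal lattice is governed by the combinatorics of $E$, a composition series can be written down explicitly, and Theorems~\ref{thmA}--\ref{thmC} together with the nuclear-dimension bound of \cite[Theorem~A]{FC23} establish all three conditions at once, verifying the conjecture in cases~\ref{A1} and~\ref{A2}.
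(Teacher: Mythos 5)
First, a framing point: Conjecture~\ref{GTWC} is exactly that --- a conjecture. The paper contains no proof of it; it only \emph{verifies} it for the graph algebras covered by Theorem~\ref{thmA}, by combining Theorem~\ref{thmC} (which settles ``no elementary subquotients $\Leftrightarrow$ pure'' for row-finite graphs via \cite{TV23, TV24, APTV24}) with the nuclear dimension bound of \cite[Theorem~A]{FC23} and the $\cZ$-stability statement of Theorem~\ref{thmA}. Your final paragraph acknowledges this, and two of your assessments are accurate: (ii)~$\Rightarrow$~(iii) is indeed known in full generality without simplicity (the paper cites \cite[Proposition~5.2]{APTV24}; your route through almost unperforation and almost divisibility is the same fact), and (iii)~$\Rightarrow$~(ii) is indeed the open heart of the problem.

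The genuine gap is your claim that (i)~$\Rightarrow$~(ii) is ``accessible'' by running the simple Toms--Winter theorem along a composition series $0 = I_0 \trianglelefteq I_1 \trianglelefteq \cdots$ with simple subquotients and reassembling via the permanence properties of \cite{TW07}. Such a series need not exist. Take $A = C([0,1]) \otimes \cZ$: it is separable, unital, nuclear, has finite nuclear dimension, and its simple subquotients are all isomorphic to $\cZ$, so it satisfies every hypothesis of the conjecture. But its ideals are of the form $C_0(U) \otimes \cZ$ with $U \subseteq [0,1]$ open, and such an ideal is simple only if $U$ is a singleton that is open in $[0,1]$ --- impossible. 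So $A$ has no simple ideals at all, and a composition series with simple successive quotients cannot even begin, transfinitely or otherwise. This is not a repairable technicality: the implication from finite nuclear dimension to $\cZ$-stability for non-simple algebras without elementary subquotients is itself part of what is open, which is precisely why \cite[Theorem~1.2]{RT17} must assume, beyond finite nuclear dimension, the existence of two full orthogonal elements in $A_\omega \cap A'$. Note also that even within the paper, the composition-series argument is confined to case~\ref{A2} of Theorem~\ref{thmA} (finitely many ideals); in case~\ref{A1}, where the AF graph algebra may have infinitely many ideals, the paper instead reduces to the corners $p_v C^*(E) p_v$ (Corollary~\ref{graphcorners}) and manufactures the Robert--Tikuisis hypothesis by hand, building approximately central unital copies of $M_2 \oplus M_3$ from nondegenerate inclusions of in-trees (Lemmas~\ref{nondegenincl} and~\ref{centralmatrix}). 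The combinatorial condition of distinct detours is exactly what substitutes for the simple decomposition your argument presupposes.
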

    Pure $C^*$-algebras were introduced by Winter in \cite{W12} (c.f. \cite{APTV24}). Pureness of a $C^*$-algebra is a regularity condition on the Cuntz semigroup $\text{Cu}(A)$ and may be thought of as ``$\cZ$-stability of the Cuntz semigroup," as every $\cZ$-stable $C^*$-algebra is pure (\cite[Proposition~5.2]{APTV24}). Some progress has been made in understanding this generalized form of the Toms--Winter conjecture, such as \cite{BL24, BGSW22, ENST20, APTV24}. Most relevant to us is the work of Robert and Tikuisis, where they proved the following theorem. Recall that an element $b \in B$ is called \textit{full} if $b$ generates $B$ as a closed, two-sided ideal. Similarly, a subset $X \subset B$ is full if $B$ is the smallest closed, two-sided ideal containing $X$.
    \begin{thm}[{\cite[Theorem~1.2]{RT17}}]
        Let $A$ be a separable, unital $C^*$-algebra with finite nuclear dimension. Then $A$ is $\cZ$-stable if and only if there are two full, orthogonal elements in $A_\omega \cap A'$.
    \end{thm}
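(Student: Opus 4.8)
The plan is to prove the two implications separately, with the forward direction being essentially formal and the converse carrying all the weight. For the forward direction, suppose $A$ is $\cZ$-stable. By the Toms--Winter characterization (Theorem~2.2 above) there is a unital embedding $\iota\colon \cZ \into A_\omega \cap A'$. Since $\cZ$ is simple, unital, and infinite-dimensional, I can pick two nonzero positive contractions $e_1, e_2 \in \cZ$ with $e_1 e_2 = 0$, for instance as the images of two disjointly supported functions under a nonzero $\ast$-homomorphism $C_0(0,1] \to \cZ$. By simplicity each $e_i$ is full in $\cZ$, so the closed two-sided ideal of $A_\omega \cap A'$ generated by $\iota(e_i)$ contains $\iota(1_\cZ) = 1_{A_\omega \cap A'}$ and is therefore all of $A_\omega \cap A'$. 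Thus $\iota(e_1)$ and $\iota(e_2)$ are two full, orthogonal elements of $A_\omega \cap A'$, as required.

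For the converse, the goal is to build a unital embedding of $\cZ$ into $A_\omega \cap A'$ out of the two full orthogonal elements and the finiteness of the nuclear dimension, after which Theorem~2.2 finishes the argument. I would follow the now-standard route that decouples $\cZ$-stability into a \emph{divisibility} ingredient and a \emph{comparison} ingredient in the central sequence algebra, and then recombines them to produce, for each $n$, a unital c.p.c.\ order zero map from a prime dimension drop algebra into $A_\omega \cap A'$. The two full orthogonal elements supply the divisibility: via functional calculus and a reindexing argument along $\omega$ they yield c.p.c.\ order zero maps $M_n \to A_\omega \cap A'$ whose images remain full (by the fullness hypothesis) and for which the orthogonality provides the internal room needed to iterate from $M_2$ up to $M_{2^k}$ inside the reindexed sequence algebra.

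The role of finite nuclear dimension is to furnish the comparison ingredient. Finite nuclear dimension forces a uniform control on the Cuntz semigroup that upgrades to a strict-comparison-type estimate holding inside $A_\omega \cap A'$; this is what allows one to show that the two full orthogonal elements are mutually Cuntz subequivalent (so that the requisite partial isometries for the order zero maps can be assembled) and, more importantly, to dominate the leftover complement $1_{A_\omega \cap A'} - \phi_n(1_{M_n})$ by the image of a single matrix unit. A property-(SI)-style selection argument then threads the order zero map together with this domination into a unital $\ast$-homomorphism from $Z_{n,n+1}$ into $A_\omega \cap A'$ for every $n$, and a limit over $n$ produces the desired unital copy of $\cZ$.

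I expect the comparison step to be the main obstacle. Extracting from finite nuclear dimension a comparison estimate that is \emph{uniform} enough to survive the passage to the ultrapower, and that controls the complement of the order zero maps, is the genuinely analytic heart of the argument: it requires careful completely positive approximations of $A$ together with a reindexing argument promoting the resulting approximate relations in $A$ to exact relations in $A_\omega \cap A'$. The fullness and orthogonality hypotheses enter precisely at this point, guaranteeing that the elements produced are large enough for the comparison to be applicable and that their supports can be kept disjoint throughout the construction.
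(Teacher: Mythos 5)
The paper does not prove this statement: it is quoted verbatim as background from \cite[Theorem~1.2]{RT17}, so there is no internal argument to compare against and your proposal must stand on its own. Its forward direction does: a unital embedding $\cZ \into A_\omega \cap A'$ (from the Toms--Winter characterization) sends two orthogonal, nonzero positive elements of $\cZ$ — which are automatically full by simplicity — to orthogonal elements of $A_\omega \cap A'$ that are full because the embedding is unital. Note that this half does not use finite nuclear dimension at all.

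The converse, which is where the entire content of the theorem lies, is a road map rather than a proof: every hard step is named (``divisibility,'' ``comparison,'' ``a property-(SI)-style selection argument'') but none is carried out, and those steps constitute the bulk of the Robert--Tikuisis paper. Moreover, one of the few concrete claims you do make is false: comparison, even strict comparison, does not show that two full orthogonal elements are mutually Cuntz subequivalent. Already in a UHF algebra (or in $M_4$), two orthogonal projections of traces $1/2$ and $1/4$ are both full, yet the larger is not subequivalent to the smaller; fullness gives no control on Cuntz comparison. This is precisely why the actual argument cannot connect the two given elements directly, and must instead cut full elements down and exploit divisibility properties of the Cuntz semigroup of $A_\omega \cap A'$, combined with Robert's result that $\ndim{A}\leq n$ implies $n$-comparison — a genuinely weaker property than strict comparison, which finite nuclear dimension does not yield outside the simple setting. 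Similarly, property (SI) is a tool for simple, tracial algebras in the Matui--Sato framework; in the non-simple, possibly traceless setting here, the key smallness estimate $1-\phi(1_{M_n})\precsim\phi(e_{11})$ must be produced by Cuntz-semigroup arguments rather than any tracial selection. So your proposal correctly identifies the target (unital c.p.c.\ order zero maps $M_n\to A_\omega\cap A'$ with controlled complement, equivalently unital maps from dimension drop algebras), but the bridge from the hypotheses to that target — which is the theorem — is missing.
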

    \par Having covered the necessary background on $\cZ$-stability, we proceed to discuss graph $C^*$-algebras. We refer the reader to Raeburn's book (\cite{R05}) for a thorough introduction to graph algebras. A directed graph is a four-tuple $E=(E^0,E^1,r,s)$, where $E^0$ is the set of \textit{vertices} of $E$, $E^1$ is the set of \textit{edges} of $E$, and $r,s \colon E^1\to E^0$ are the \textit{range} and \textit{source} maps. A vertex $v$ such that $r^{-1}(v)=\emptyset$ is called a \textit{source}, and a vertex with $s^{-1}(v)=\emptyset$ is a \textit{sink}. We say that $E$ is \textit{row-finite} if $r^{-1}(v)$ is finite for all vertices $v \in E^0$. We follow Raeburn's convention (\cite{R05}) and write paths in $E$ from right to left. Let $E^*$ denote the paths in $E$ of finite length. 
    \begin{defi}[{\cite[Proposition~4.1]{KPRR97}}]\label{CKdef}
        Given a directed graph $E$, a Cuntz--Krieger $E$-family $(S,P)$ is a family of orthogonal projections $\{P_v\}_{v \in E^0}$ and partial isometries $\{S_e\}_{e \in E^1}$ such that
        \begin{enumerate}
            \item $S_e^*S_e=P_{s(e)}$,
            \item $P_vS_e=P_v$ when $r(e)=v$, and \label{CK2}
            \item $P_v=\sum_{r(e)=v}S_eS_e^*$ when $0<\abs{r^{-1}(v)}<\infty$.
        \end{enumerate}
    \end{defi}
    \noindent The graph $C^*$-algebra $C^*(E)$ is the universal $C^*$-algebra generated by a Cuntz--Krieger family $(s,p)$. Given a path $\mu=\mu_1\mu_2\cdots\mu_n$, we write $s_\mu$ for the product $s_{\mu_1}s_{\mu_2}\cdots s_{\mu_n}$. Importantly, we can also define the following $\ast$-algebra that is dense in $C^*(E)$.
    \begin{defi}{\cite{GAP05, AMP07}}
        The Leavitt Path Algebra $L_\bC(E)$ is the universal $\ast$-algebra generated by a Cuntz--Krieger $E$-family $(s,p)$. Furthermore, $L_\bC(E)=\Span\{s_\mu s_\nu^*:\mu, \nu \in E^*, s(\mu)=s(\nu)\}$.
    \end{defi}
    \begin{thm}[{\cite[Corollary~1.16]{R05}}]
        $L_\bC(E)$ is a dense $\ast$-subalgebra of $C^*(E)$. That is
        $$C^*(E)=\clo{\Span}\{s_\mu s_\nu^*:\mu, \nu \in E^*, s(\mu)=s(\nu)\}\cong \clo{L_\bC(E)}$$
    \end{thm}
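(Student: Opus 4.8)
The plan is to prove the two assertions separately: that $\Span\{s_\mu s_\nu^* : \mu, \nu \in E^*,\ s(\mu) = s(\nu)\}$ is a $\ast$-subalgebra of $C^*(E)$, and that its closure is all of $C^*(E)$. The second assertion will follow immediately from the first, since by definition $C^*(E)$ is the universal $C^*$-algebra generated by the Cuntz--Krieger family $(s,p)$, hence equals the norm closure of the $\ast$-algebra that this family generates. It therefore suffices to identify that $\ast$-algebra with the claimed span, and the identification $C^*(E) \cong \clo{L_\bC(E)}$ is then read off from the universal property of $L_\bC(E)$.

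The computational heart of the argument is the orthogonality relation $s_e^* s_f = \delta_{e,f}\, p_{s(e)}$ for $e, f \in E^1$. The diagonal case is relation (i) of Definition~\ref{CKdef}. For $e \neq f$ I would show that the range projections $s_e s_e^*$ and $s_f s_f^*$ are orthogonal. When $r(e) = r(f) =: v$, row-finiteness guarantees $0 < \abs{r^{-1}(v)} < \infty$, so relation (iii) writes $p_v$ as a finite sum of the projections $s_g s_g^*$ with $r(g) = v$; a sum of projections equal to a projection forces the summands to be mutually orthogonal. When $r(e) \neq r(f)$, each range projection is dominated by its vertex projection (again via relation (iii) applied at $r(e)$ and $r(f)$), and orthogonality of the vertex projections finishes the case. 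From $(s_e s_e^*)(s_f s_f^*) = 0$, together with the partial-isometry identities $s_e^* = s_e^*(s_e s_e^*)$ and $(s_f s_f^*) s_f = s_f$, one then computes $s_e^* s_f = 0$.

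Iterating this relation edge by edge yields the standard path-comparison formula: $s_\nu^* s_\alpha$ equals $s_{\alpha'}$ if $\alpha = \nu\alpha'$, equals $s_{\nu'}^*$ if $\nu = \alpha\nu'$, equals $p_{s(\nu)}$ if $\nu = \alpha$, and vanishes otherwise. Substituting this into a product $s_\mu s_\nu^* \cdot s_\alpha s_\beta^*$ and using $s(\mu) = s(\nu)$ and $s(\alpha) = s(\beta)$ shows that each nonzero case collapses to a single term of the form $s_{\mu''} s_{\beta''}^*$ with $s(\mu'') = s(\beta'')$ (the case $\nu = \alpha$ producing a vertex projection that is absorbed into $s_\mu$), so the span is closed under multiplication. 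It is visibly closed under adjoints via $(s_\mu s_\nu^*)^* = s_\nu s_\mu^*$, and it contains each generator since $p_v = p_v p_v^*$ and $s_e = s_e p_{s(e)}$ with $s(e)$ regarded as a length-zero path. Hence the span is a $\ast$-subalgebra containing the Cuntz--Krieger family, so it equals the $\ast$-algebra they generate, and its closure is $C^*(E)$.

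The main obstacle is the orthogonality relation $s_e^* s_f = \delta_{e,f}\, p_{s(e)}$: this is the one place where the Cuntz--Krieger relations are used essentially rather than through formal manipulation of partial isometries, and it requires handling the $r(e) = r(f)$ and $r(e) \neq r(f)$ cases with care, with row-finiteness ensuring relation (iii) is available at any vertex receiving at least one edge. For the closing isomorphism, I would invoke universality of $L_\bC(E)$: the defining relations produce a surjective $\ast$-homomorphism from $L_\bC(E)$ onto the dense span above, and the structural computations identify this span as $\Span\{s_\mu s_\nu^*\}$, so that $\clo{L_\bC(E)} \cong C^*(E)$.
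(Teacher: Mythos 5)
Your proposal is correct and takes essentially the same route as the proof of the result the paper is quoting: the paper gives no argument of its own here (it states this as background, citing \cite[Corollary~1.16]{R05}), and Raeburn's proof proceeds exactly as you do, first deriving $s_e^*s_f=\delta_{e,f}\,p_{s(e)}$ from orthogonality of the range projections (using relation (iii) at receiving vertices, where row-finiteness matters), then iterating to the path-comparison formula, and finally concluding that the span is a dense $\ast$-subalgebra containing the generators, with the identification $C^*(E)\cong\clo{L_\bC(E)}$ read off from universality. No gaps to flag.
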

    Arithmetic with elements of $L_\bC(E)$ is very concrete due to the following theorem:
    \begin{thm}[{\cite[Corollary~1.15]{R05}}]
        Let $\mu, \nu, \gamma, \lambda$ be paths in $E^*$, and $E$ a row-finite graph. Then
        $$(s_\mu s_\nu^*)(s_\gamma s_\lambda^*)=\begin{cases}
            s_{\mu \gamma'}s_\lambda^* & \text{ if $\gamma = \nu\gamma'$}\\
            s_{\mu}s_{\lambda \nu'}^* & \text{ if $\nu = \gamma \nu'$}\\
            0 & \text{ otherwise.}
        \end{cases}$$
    \end{thm}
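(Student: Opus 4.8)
The plan is to reduce the entire computation to a single edge-level identity, namely $s_e^* s_f = \delta_{e,f}\, p_{s(e)}$ for $e,f \in E^1$, and then to evaluate the product by sliding the two middle factors together. Concretely, since $(s_\mu s_\nu^*)(s_\gamma s_\lambda^*) = s_\mu\,(s_\nu^* s_\gamma)\,s_\lambda^*$, all of the content lies in computing the middle factor $s_\nu^* s_\gamma$, after which the three stated cases follow by substitution.

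First I would establish the edge-level orthogonality. The key point is that the range projections $q_e := s_e s_e^*$ are mutually orthogonal for distinct edges. If $r(e)\ne r(f)$, this is immediate from $q_e \le p_{r(e)}$ and $q_f \le p_{r(f)}$ (a consequence of relation~\ref{CK2}) together with orthogonality of the vertex projections, $p_{r(e)}p_{r(f)}=0$. If $r(e)=r(f)=v$, then row-finiteness gives $0<\abs{r^{-1}(v)}<\infty$, so relation~(iii) yields $p_v = \sum_{r(e')=v} q_{e'}$; compressing $p_v - q_e = \sum_{e'\ne e} q_{e'}$ by $q_e$ gives $\sum_{e'\ne e}(q_{e'}q_e)^*(q_{e'}q_e)=0$, whence $q_{e'}q_e = 0$ for all $e'\ne e$. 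Orthogonality of the $q_e$ then gives $s_e^* s_f = s_e^*(q_e q_f)s_f = 0$ for $e\ne f$, while $s_e^* s_e = p_{s(e)}$ is relation~(i).

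Next I would compute $s_\nu^* s_\gamma$ for paths $\nu=\nu_1\cdots\nu_m$ and $\gamma=\gamma_1\cdots\gamma_n$ by induction on $\min(m,n)$. Expanding $s_\nu^* s_\gamma = s_{\nu_m}^*\cdots s_{\nu_1}^* s_{\gamma_1}\cdots s_{\gamma_n}$, the innermost factor $s_{\nu_1}^* s_{\gamma_1}$ vanishes unless $\nu_1=\gamma_1$, in which case it equals $p_{s(\nu_1)}$ and is absorbed into the adjacent edge (as $s(\nu_1)=r(\gamma_2)$, so $p_{s(\nu_1)}s_{\gamma_2}=s_{\gamma_2}$ by relation~\ref{CK2}), reducing to $s_{\tilde\nu}^* s_{\tilde\gamma}$ with $\tilde\nu=\nu_2\cdots\nu_m$ and $\tilde\gamma=\gamma_2\cdots\gamma_n$. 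Carrying the induction through yields exactly three outcomes: if $\gamma=\nu\gamma'$ then $s_\nu^* s_\gamma = s_{\gamma'}$; if $\nu=\gamma\nu'$ then $s_\nu^* s_\gamma = s_{\nu'}^*$ (by taking adjoints of the previous case); and otherwise $s_\nu^* s_\gamma = 0$. Here I would adopt the convention $s_{(\text{empty path at }v)}=p_v$, so that the two nontrivial cases agree when $\nu=\gamma$.

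Finally, substituting into $s_\mu(s_\nu^* s_\gamma)s_\lambda^*$ produces the stated formula directly: $s_\mu s_{\gamma'} s_\lambda^* = s_{\mu\gamma'} s_\lambda^*$ when $\gamma=\nu\gamma'$, and $s_\mu s_{\nu'}^* s_\lambda^* = s_\mu(s_\lambda s_{\nu'})^* = s_\mu s_{\lambda\nu'}^*$ when $\nu=\gamma\nu'$. The concatenations $\mu\gamma'$ and $\lambda\nu'$ are legitimate paths precisely because of the source/range matching implicit in the setup, using the standing assumptions $s(\mu)=s(\nu)$ and $s(\gamma)=s(\lambda)$ for elements of $L_\bC(E)$: indeed $r(\gamma')=s(\nu)=s(\mu)$ and $r(\nu')=s(\gamma)=s(\lambda)$. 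The main obstacle is the edge-orthogonality step, since it is the only place where the Cuntz--Krieger relations (and row-finiteness, to guarantee a genuine summation relation at each non-source receiver) do real work; everything afterward is bookkeeping carried by the induction.
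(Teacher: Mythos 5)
Your proof is correct, and there is nothing in the paper to compare it against: the statement is quoted as background from \cite[Corollary~1.15]{R05} without proof, and your reduction to the middle factor $s_\nu^*s_\gamma$ via edge-level orthogonality of the range projections plus induction is essentially the standard argument given in that reference. One remark: relation~(ii) of Definition~\ref{CKdef} is misprinted in the paper as $p_vs_e=p_v$ when $r(e)=v$ (the intended relation is $p_vs_e=s_e$); your argument correctly uses the intended form, both to get $s_es_e^*\le p_{r(e)}$ and to absorb $p_{s(\nu_1)}$ into $s_{\gamma_2}$ in the induction step.
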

    Of note is the existence of a gauge action $\Gamma$ of the circle $\bT$ on $C^*(E)$ such that $\Gamma_z(p_v)=p_v$ and $\Gamma_z(s_e)=zs_e$ for all $z \in \bT,v \in E^0,$ and $e \in E^1$. Of particular interest are the ideals of $C^*(E)$ that are invariant under the gauge action. These ideals are parameterized by certain subsets of $E^0$. 
    \begin{defi}
        A subset $H \subseteq E^0$ is \textit{hereditary} if whenever $v \in H, w \in E^0$, and there is a path from $w$ to $v$, we have $w \in H$ as well. We call $H$ \textit{saturated} if for any vertex $v \in E^0$ with $\emptyset\neq r^{-1}(v)\subseteq H$, we have $v \in H$.
    \end{defi}
    \noindent Given a hereditary, saturated subset $H\subseteq E^0$, there are two important related graphs, $E_H$ and $E\setminus H$. The sets of edges and vertices are given as follows:
    \begin{align*}
        &E_H^0=H  & (E\setminus H)^0&=E^0\setminus H\\
        &E_H^1=\{e \in E^1: r(e)\in H\} &(E\setminus H)^1&=\{e \in E^1: s(e) \notin H\}
    \end{align*}
    The range and source maps of $E_H$ and $E\setminus H$ are the appropriate restrictions of $r$ and $s$ from $E$. The gauge-invariant ideals of $C^*(E)$ are then parameterized as follows:
    \begin{thm}[{\cite[Theorem~4.1]{BPRS00}}]
        Let $E$ be a row-finite graph. Then there is a bijection between hereditary, saturated subsets $H \subseteq E^0$ and gauge-invariant ideals $I \trianglelefteq C^*(E)$ given by $H \mapsto I_H$, where $I_H$ is the ideal of $C^*(E)$ generated by $\{p_v: v \in H\}$. Furthermore, there is a short exact sequence,
        \begin{center}
        \begin{tikzcd}
            0 \arrow[r] & I_H \arrow[r] & C^*(E) \arrow[r] & C^*(E \setminus H) \arrow[r] & 0,
        \end{tikzcd}
        \end{center}
        with $I_H$ containing $C^*(E_H)$ as a full corner.
    \end{thm}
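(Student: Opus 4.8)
The plan is to derive every assertion from the gauge-invariant uniqueness theorem (GIUT), which for a row-finite graph $E$ states that a Cuntz--Krieger $E$-family $(T,Q)$ with all $Q_v\neq 0$ and admitting a compatible $\bT$-action generates a faithful copy of $C^*(E)$ (see \cite{R05}); in particular all vertex projections of a universal family are nonzero, and a nonzero gauge-invariant ideal must contain some $p_v$ (otherwise the quotient family would have all projections nonzero, forcing the ideal to be zero by GIUT). The forward map is immediate: since $\Gamma_z(p_v)=p_v$, the generating set $\{p_v:v\in H\}$ of $I_H$ is gauge-invariant, hence so is $I_H$.

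First I would identify $C^*(E)/I_H$ with $C^*(E\setminus H)$. Writing $q$ for the quotient map, the classes $\{q(p_v):v\notin H\}\cup\{q(s_e):s(e)\notin H\}$ satisfy the Cuntz--Krieger relations for $E\setminus H$: the only point of substance is relation (iii), where an edge $e$ into $v\notin H$ with $s(e)\in H$ contributes nothing because $s_e=s_ep_{s(e)}\in I_H$, and hereditarity guarantees the remaining edges stay within $E^0\setminus H$. Universality then gives a surjection $\phi\colon C^*(E\setminus H)\to C^*(E)/I_H$. To invert it I would extend the universal $(E\setminus H)$-family $(t,r)$ to an $E$-family by setting the vertex projections over $H$ and the edge isometries over edges with source in $H$ equal to zero; verifying relation (iii) for this extended family at a vertex $v\notin H$ is exactly where saturation is used, since a vertex whose incoming edges all emanate from $H$ would otherwise break the relation, and saturation forces such a vertex into $H$. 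Universality produces $\psi\colon C^*(E)\to C^*(E\setminus H)$ annihilating $I_H$, so it factors through $q$, and $\phi,\psi$ are mutually inverse. Since each $r_v\neq 0$, this simultaneously yields the short exact sequence and shows $q(p_v)\neq 0$ for all $v\notin H$.

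Injectivity of $H\mapsto I_H$ now follows, as the previous paragraph shows $H=\{v:p_v\in I_H\}$ is recovered from $I_H$. For surjectivity, given a gauge-invariant ideal $I$ I would set $H_I=\{v:p_v\in I\}$ and check it is hereditary and saturated from the relations: (ii) gives $s_e=p_{r(e)}s_e\in I$ and hence $p_{s(e)}=s_e^*s_e\in I$ whenever $r(e)\in H_I$ (hereditarity), while (iii) gives $p_v=\sum_{r(e)=v}s_es_e^*\in I$ whenever every $s(e)\in H_I$ (saturation). The inclusion $I_{H_I}\subseteq I$ is clear, and for the reverse I would push $I$ into $C^*(E)/I_{H_I}\cong C^*(E\setminus H_I)$: its image is a gauge-invariant ideal, so if nonzero it contains some $q(p_w)$, whence $p_w\in I+I_{H_I}=I$ and $w\in H_I$, a contradiction. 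Thus the image is zero and $I=I_{H_I}$.

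Finally, for the corner statement I would observe that $\{p_v:v\in H\}\cup\{s_e:r(e)\in H\}$ is a Cuntz--Krieger $E_H$-family lying inside $I_H$, with nonzero vertex projections (hereditarity ensures $s(e)\in H$ whenever $r(e)\in H$, so these generators do sit in $I_H$). The GIUT gives an embedding $C^*(E_H)\into I_H$ whose image is the corner $PI_HP$, where $P=\sum_{v\in H}p_v$ is the projection onto $H$ in $M(I_H)$; the corner is full precisely because $I_H$ is generated by the $p_v=Pp_vP$ with $v\in H$, all of which lie in $PI_HP$. The main obstacle throughout is the careful deployment of the GIUT and, in particular, locating exactly where hereditarity and saturation are needed to keep vertex projections from collapsing in the quotient and the corner; the surjectivity step, recovering an arbitrary gauge-invariant ideal from its vertex projections, is the most delicate point and is where the structural consequence of the GIUT (nonzero gauge-invariant ideals contain vertex projections) does the essential work.
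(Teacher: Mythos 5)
The paper does not prove this statement at all: it is quoted as background, with the proof deferred to \cite[Theorem~4.1]{BPRS00}, so there is no internal argument to compare against. Your proof is correct and is essentially the standard argument from that literature: gauge-invariance of $I_H$ from invariance of its generators; identification of $C^*(E)/I_H$ with $C^*(E\setminus H)$ by two applications of universality, with saturation used exactly where you locate it (relation (iii) for the family extended by zeros) and hereditarity ensuring the zero pattern is consistent; recovery of $H$ as $\{v: p_v\in I_H\}$; and the gauge-invariant uniqueness theorem supplying the key structural fact that a nonzero gauge-invariant ideal of a graph algebra must contain a vertex projection, which drives the surjectivity step. The only place you assert more than you verify is the corner statement: showing that the copy of $C^*(E_H)$ is exactly $PI_HP$, not merely contained in it, requires the spanning description $I_H=\clo{\Span}\{s_\mu s_\nu^* : s(\mu)=s(\nu)\in H\}$ together with the observation that hereditarity forces any path with range in $H$ to lie entirely in $E_H$, and one should also note that $P=\sum_{v\in H}p_v$ converges strictly in $M(I_H)$. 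These are routine computations (and your fullness argument goes through once the identification is in place, using that a closed ideal of an ideal is an ideal), so this is a minor omission rather than a genuine gap.
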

    Furthermore, in the presence of Condition~(K), every ideal of $C^*(E)$ will be gauge invariant. Given a vertex $v \in E^0$, a \textit{return path} for $v$ is a path $\mu \in E^*$ that only contains $v$ as its source and range.
    \begin{defi}[{\cite[Section~6]{KPRR97}}]
        A graph $E$ is said to have Condition~(K) if every vertex on a cycle has two distinct return paths.
    \end{defi}
    \noindent Note that graphs without cycles trivially satisfy Condition~(K). 
    \begin{thm}[{\cite[Theorem~4.4]{BPRS00}, c.f. \cite[Theorem~6.6]{KPRR97}}]
        Let $E$ be a row-finite graph with Condition~(K). Then every ideal of $C^*(E)$ is gauge-invariant (and so given by a hereditary, saturated subset of $E^0$).
    \end{thm}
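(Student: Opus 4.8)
The plan is to show that an arbitrary ideal $I \trianglelefteq C^*(E)$ coincides with the gauge-invariant ideal $I_H$ attached to a canonically associated hereditary, saturated set; the parameterization recalled above then forces $I$ to be gauge-invariant. First I would set $H = \{v \in E^0 : p_v \in I\}$ and verify directly from the Cuntz--Krieger relations that $H$ is hereditary and saturated. For heredity, if $p_v \in I$ and $e$ is an edge with $r(e) = v$, then $p_{s(e)} = s_e^* s_e = s_e^*\, p_v\, s_e \in I$ (using $p_{r(e)} s_e = s_e$), so $s(e) \in H$; iterating along paths gives the full hereditary condition. For saturation, if $\emptyset \neq r^{-1}(v) \subseteq H$, then relation~(iii) gives $p_v = \sum_{r(e)=v} s_e s_e^* = \sum_{r(e)=v} s_e\, p_{s(e)}\, s_e^*$, a finite sum (by row-finiteness) of elements of $I$, whence $p_v \in I$ and $v \in H$.

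Since each generator $p_v$ of $I_H$ lies in $I$, we have $I_H \subseteq I$, and only the reverse inclusion remains. Here I would pass to the quotient: the short exact sequence above identifies $C^*(E)/I_H$ with $C^*(E \setminus H)$, and the inclusion $I_H \subseteq I$ induces a surjection $\rho \colon C^*(E \setminus H) \to C^*(E)/I$. Under this identification the vertex projection $q_v$ of $C^*(E \setminus H)$ (for $v \in (E\setminus H)^0 = E^0 \setminus H$) is carried to $p_v + I$. By the very definition of $H$, a vertex $v \notin H$ has $p_v \notin I$, so $\rho(q_v) \neq 0$ for every vertex of $E \setminus H$. Thus $\rho$ is a $\ast$-homomorphism that is nonzero on each vertex projection, and proving $I = I_H$ reduces to showing that $\rho$ is injective.

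The crux is therefore a Cuntz--Krieger-type uniqueness theorem, and this is exactly where Condition~(K) enters. I would first check that Condition~(K) is inherited by $E \setminus H$ for every hereditary, saturated $H$; this is the structural advantage of Condition~(K) over the weaker Condition~(L) (every cycle has an exit), as it guarantees that no cycle without an exit is created in any quotient. Granting the uniqueness theorem---that a representation of $C^*(F)$, for a graph $F$ with Condition~(K), which is nonzero on every vertex projection is automatically faithful---I would apply it to $F = E \setminus H$ and the map $\rho$ to conclude that $\rho$ is injective, so that $I = I_H$ is gauge-invariant.

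The main obstacle is the uniqueness theorem itself: upgrading faithfulness on vertex projections to faithfulness of $\rho$. The difficulty lies in the ``cyclic'' part of the algebra---elements $s_\mu s_\nu^*$ supported on cycles---where a representation could a priori collapse norm without annihilating any $p_v$. The technical heart is an argument, via a faithful conditional expectation onto the AF core together with the exits to cycles supplied by Condition~(L), showing that $\norm{\rho(a)} = \norm{a}$ for positive $a$; Condition~(K) is what makes this available uniformly across all quotients. I would isolate this uniqueness statement as the one genuinely hard input and treat the reduction to it, carried out above, as routine.
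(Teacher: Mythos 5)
The paper does not prove this statement itself---it is quoted from \cite[Theorem~4.4]{BPRS00} as a background result---but your proposal correctly reconstructs the standard argument of the cited source: set $H=\{v\in E^0: p_v\in I\}$, verify from the Cuntz--Krieger relations that $H$ is hereditary and saturated so that $I_H\subseteq I$, and apply the Cuntz--Krieger uniqueness theorem to the induced surjection $C^*(E\setminus H)\to C^*(E)/I$, which is nonzero on all vertex projections by the definition of $H$. Your two key observations---that Condition~(K), unlike Condition~(L), passes to every quotient graph $E\setminus H$, and that the uniqueness theorem is the one genuinely hard (but standard, citable) input---are exactly the points the literature proof turns on, so this is essentially the same proof.
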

    While many of the above results have considered row-finite graphs, we are also interested in graphs with infinite receivers. The standard construction to reduce a problem of arbitrary graphs to that of row-finite graphs is a \textit{Drinen--Tomforde desingularization}, defined in \cite{DT05}.
    \begin{defi}[{\cite[Chapter~5]{R05}}]
        An infinite path $\mu=\mu_1\mu_2\cdots$ is said to be \textit{collapsible} if the following hold:
        \begin{enumerate}
            \item If $\mu$ has an exit, it occurs at $r(\mu)$.
            \item The set $r^{-1}(r(\mu_i))$ is finite for every $i$.
            \item We have $r^{-1}(r(\mu))$ is $\{\mu_1\}$.
        \end{enumerate}
    \end{defi}
    Given a row-finite graph $E$ and a collapsible path $\mu$ in $E$, a new graph may be formed by collapsing $\mu$ into a single vertex and redefining the source and range maps appropriately (for more details, see \cite[Chapter~5]{R05}).
    \begin{defi}[{\cite[Definition~2.2]{DT05}}]
        Let $E$ be an arbitrary graph. A \textit{Drinen--Tomforde desingularization} $(F,M)$ of $E$ is a row-finite graph $F$ with a collection of collapsible paths $M$ such that collapsing the paths in $M$ results in the graph $E$.
    \end{defi}
    \noindent With this definition in hand, the following theorem (\cite[Theorem~2.11]{DT05}) provides the reduction of many problems regarding arbitrary graphs to that of row-finite graphs, especially when combined with \cite[Theorem~2.8]{B77} to produce a stable isomorphism.
    \begin{thm}[{\cite[Theorem~2.11]{DT05}}]
        Let $E$ be an arbitrary graph and $(F,M)$ a Drinen--Tomforde desingularization. Then $C^*(E)$ is isomorphic to a full corner of $C^*(F)$.
    \end{thm}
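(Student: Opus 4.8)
The plan is to realize $C^*(E)$ as the corner $Q\,C^*(F)\,Q$ of $C^*(F)$ cut down by a suitable projection, and then to check that this projection is full. Since collapsing the paths in $M$ returns $E$, every vertex of $E$ survives in $F$, so $E^0 \subseteq F^0$, and each edge $e \in E^1$ is encoded by a finite path $\alpha_e$ in $F$ with $r(\alpha_e)=r(e)$ and $s(\alpha_e)=s(e)$, both lying in $E^0$; the remaining vertices of $F$ lie on the added tails. I would set $Q=\sum_{v \in E^0} p_v$, a projection in $M(C^*(F))$ (the sum converging strictly), and define $P_v=p_v$ for $v \in E^0$ and $S_e=s_{\alpha_e}$ for $e \in E^1$. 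The first task is to verify that $(S,P)$ is a Cuntz--Krieger $E$-family. Relations (i) and (ii) of Definition~\ref{CKdef} are immediate from $s(\alpha_e)=s(e)$ and $r(\alpha_e)=r(e)$, while relation (iii) is the delicate point: at a finite receiver of $E$ the incoming edges are unaltered by the desingularization, so $p_v=\sum_{r(e)=v}s_es_e^*$ already computes the required sum, whereas at an infinite receiver no relation (iii) is imposed in $E$, and one checks only that the $S_eS_e^*$ are mutually orthogonal subprojections of $P_v$, which holds because the encoding paths into $v$ pairwise diverge along the tail. Each $S_e$ and $P_v$ lies in $Q\,C^*(F)\,Q$ because its range and source sit in $E^0$, so the universal property of $C^*(E)$ yields a $*$-homomorphism $\pi \colon C^*(E) \to Q\,C^*(F)\,Q$ with $\pi(p_v)=P_v$ and $\pi(s_e)=S_e$.

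Next I would establish injectivity of $\pi$ by the gauge-invariant uniqueness theorem (\cite{R05}). The obstruction is that the encoding paths $\alpha_e$ have varying lengths, so the gauge action of $F$ does not transport the $E$-gauge action through $\pi$. To remedy this, choose the weight function $w \colon F^1 \to \mathbb{Z}$ assigning $1$ to each edge of $F$ coming from an edge of $E$ and $0$ to each tail edge; since any weight function respects the Cuntz--Krieger relations of $F$, the formulas $\beta_z(p_v)=p_v$ and $\beta_z(s_f)=z^{w(f)}s_f$ define a strongly continuous action $\beta$ of $\bT$ on $C^*(F)$. By construction the total weight of each $\alpha_e$ is $1$, so $\beta_z(S_e)=zS_e$ and $\beta_z(P_v)=P_v$; hence $\pi$ intertwines the gauge action of $C^*(E)$ with $\beta$, and as $\pi(p_v)=p_v\neq 0$ for every $v$, the gauge-invariant uniqueness theorem forces $\pi$ to be injective. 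Fullness of $Q$ is then straightforward: each tail vertex $w$ admits a path $\tau$ in $F$ with $s(\tau)=w$ and $r(\tau)=u \in E^0$, whence $p_w=s_\tau^*p_us_\tau$ lies in the ideal generated by $Q$; since the vertex projections together generate $C^*(F)$, this ideal is all of $C^*(F)$.

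It remains to prove that $\pi$ is surjective onto the corner, and this is the step I expect to require the most bookkeeping. The corner $Q\,C^*(F)\,Q$ is the closed span of the elements $s_\mu s_\nu^*$ with $r(\mu),r(\nu)\in E^0$ and $s(\mu)=s(\nu)$, so it suffices to place each such generator in the image of $\pi$. Reading $\mu$ from its range end, I would tile it by complete encoding paths, the only failure being that $\mu$ may terminate partway down a tail, leaving an incomplete tail segment $\sigma$ with $s(\sigma)=s(\mu)$ a tail vertex and $r(\sigma)\in E^0$; thus $\mu=\beta\sigma$ and $\nu=\beta'\sigma'$ with $\beta,\beta'$ concatenations of encoding paths. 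When $s(\mu)=s(\nu)$ is a tail vertex, $\sigma$ and $\sigma'$ run up the same tail from the same vertex to the same top vertex $v\in E^0$, forcing $\sigma=\sigma'$; because every interior tail vertex receives a single edge and the infinite receiver $v$ receives only the first tail edge, $s_\sigma s_\sigma^*$ telescopes to $p_v=P_v$, so $s_\mu s_\nu^*=s_\beta\,P_v\,s_{\beta'}^*=s_\beta s_{\beta'}^*$ is a finite product of the $S_e$ and their adjoints and lies in $\pi(C^*(E))$. The case $s(\mu)=s(\nu)\in E^0$ is immediate. The main obstacle is carrying out this reduction while confirming that no generator is overlooked, in particular that sink tails introduce no new source behaviour since no path from a sink-tail vertex has range in $E^0$. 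Once this is done, $\pi$ is an isomorphism of $C^*(E)$ onto the full corner $Q\,C^*(F)\,Q$, completing the proof.
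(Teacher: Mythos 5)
The paper itself offers no proof of this statement; it is quoted verbatim from Drinen--Tomforde, so your proposal can only be compared with the standard argument of \cite[Theorem~2.11]{DT05}. In outline you reproduce that argument faithfully: build a Cuntz--Krieger $E$-family $(S,P)$ inside the corner $QC^*(F)Q$, invoke universality to get $\pi$, prove injectivity via a re-weighted gauge action (weight $1$ on the edges coming from $E^1$, weight $0$ on tail edges, so that each $\beta_z(S_e)=zS_e$), and prove fullness of $Q$ using that every added vertex has a path into $E^0$. Those three steps are correct, including the verification of relation (iii) at finite receivers and the orthogonality of the $S_eS_e^*$ at infinite receivers.

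The surjectivity step, however, contains a genuine error. You justify $s_\sigma s_\sigma^*=p_v$ by asserting that ``every interior tail vertex receives a single edge and the infinite receiver $v$ receives only the first tail edge.'' The second half is consistent with the paper's definition of a collapsible path (condition (iii) there), but the first half is false exactly where it matters: for the head attached at an infinite receiver $v$, the desingularization redistributes the infinitely many edges of $r_E^{-1}(v)$ along the tail, so the $i$-th tail vertex receives \emph{two} edges, the next tail edge $g_{i+1}$ and one rerouted edge. (Your own orthogonality argument for the $S_eS_e^*$ at infinite receivers, via ``the encoding paths pairwise diverge along the tail,'' relies on precisely this fact, so the two steps of your proposal contradict each other.) Consequently, for $\sigma=g_1g_2\cdots g_i$ the projection $s_\sigma s_\sigma^*$ does \emph{not} telescope to $p_v$; rather it is the proper subprojection $s_\sigma s_\sigma^* = P_v-\sum_{j=1}^{i}S_{e_j}S_{e_j}^*$, where $e_1,\dots,e_i$ are the edges of $E$ rerouted to the first $i$ tail vertices. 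The theorem survives, because this identity still exhibits $s_\sigma s_\sigma^*$ as a finite polynomial in the $E$-family, whence $s_\mu s_\nu^* = s_\beta\bigl(P_v-\sum_{j\leq i}S_{e_j}S_{e_j}^*\bigr)s_{\beta'}^*$ lies in $\pi(C^*(E))$; but this correction is essential, since your telescoping premise holds only for heads added at sources (sinks in Drinen--Tomforde's edge convention), and as written your argument fails for every generator whose source lies on the head of an infinite receiver --- that is, in the only case where the desingularization does anything at all.
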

    \par In our proof of Theorem~\ref{thmA}, case \ref{A1}, we will need to ensure our inclusion of subgraphs corresponds to inclusion of graph $C^*$-algebras. We will use Jeong and Park's technique from \cite{JP02}, adapted for our convention on paths.
    \begin{defi}[{\cite[Definition~3.2]{JP02}}]
        Let $F$ be a subgraph of $E$. $F$ is \textit{entrance-complete} if whenever $f \in F^1$ and $e \in E^1$ with $r(e)=r(f)$, then $e \in F^1$ and $s(e) \in F^0$.
    \end{defi}
    \noindent This essentially forces $F$ to include all edges entering vertices of $F$ that are not sources in $F$. An inclusion of an entrance-complete subgraph corresponds to an inclusion of the graph $C^*$-algebras. We remark that this result also extends to the Leavitt path algebras $L_\bC(F)$ and $L_\bC(E)$.
    \begin{thm}[{\cite[Proposition~3.1]{JP02}}]
        Let $E$ be a row-finite directed graph and $F$ an entrance-complete subgraph of $E$. Let $B\subseteq C^*(E)$ be the $C^*$-subalgebra of $C^*(E)$ generated by $\{s_e,p_v: e \in F^1, v \in F^0\}$. Then $C^*(F)$ is isomorphic to $B$ by sending the Cuntz--Krieger family for $C^*(F)$ to the family $(s,p)$ defined by $\{s_e,p_v:e \in F^1, v \in F^0\}$.
    \end{thm}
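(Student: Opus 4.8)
The plan is to construct the isomorphism directly from the universal property of $C^*(F)$ and then establish injectivity with the gauge-invariant uniqueness theorem. Write $(t,q)$ for the canonical Cuntz--Krieger family generating $C^*(F)$, and keep $(s,p)$ for the generators of $C^*(E)$. The first step is to verify that the subfamily $\{s_e,p_v : e \in F^1, v \in F^0\}$ inside $C^*(E)$ is itself a Cuntz--Krieger $F$-family in the sense of Definition~\ref{CKdef}. Once this is done, universality of $C^*(F)$ supplies a $\ast$-homomorphism $\pi\colon C^*(F)\to C^*(E)$ with $\pi(t_e)=s_e$ and $\pi(q_v)=p_v$, whose image is precisely the generated subalgebra $B$; the entire content of the theorem is then that $\pi$ is injective.

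For the Cuntz--Krieger relations, note that since $F$ is a subgraph, every $e \in F^1$ has $s(e),r(e)\in F^0$, so relations (i) and (ii) of Definition~\ref{CKdef} hold for the subfamily simply because they already hold in $C^*(E)$. The only delicate point is relation (iii), and this is exactly where entrance-completeness is used. Fix $v \in F^0$ with $0 < \abs{r_F^{-1}(v)} < \infty$; then $v$ receives some $f \in F^1$, and entrance-completeness forces every $e \in E^1$ with $r(e)=v$ to lie in $F^1$. Hence $r_F^{-1}(v)=r_E^{-1}(v)$, so the relation $p_v=\sum_{r(e)=v}s_es_e^*$ that holds in $C^*(E)$ (using that $E$ is row-finite) coincides with the relation demanded over $F$. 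Conversely, if $v$ is a source in $F$ then $r_F^{-1}(v)=\emptyset$ and relation (iii) is vacuous there, imposing no constraint. This confirms that the subfamily is a Cuntz--Krieger $F$-family.

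To prove injectivity I would invoke the gauge-invariant uniqueness theorem for row-finite graph algebras (as in \cite{R05}), for which two hypotheses must be checked. First, $\pi(q_v)=p_v\neq 0$ for every $v\in F^0$, since the vertex projections of $C^*(E)$ are all nonzero. Second, $\pi$ must intertwine the gauge actions. The ambient gauge action $\Gamma$ on $C^*(E)$ satisfies $\Gamma_z(p_v)=p_v$ and $\Gamma_z(s_e)=zs_e$, so it sends each generator of $B$ to a scalar multiple of itself; in particular $B$ is $\Gamma$-invariant and $\restrict{\Gamma}{B}$ is a genuine gauge action. Comparing on generators shows $\pi\circ\gamma_z=\Gamma_z\circ\pi$, where $\gamma$ is the gauge action on $C^*(F)$. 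The gauge-invariant uniqueness theorem then yields injectivity, so $\pi\colon C^*(F)\to B$ is the desired isomorphism.

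The main obstacle is conceptual rather than computational: it is the injectivity of $\pi$, since \emph{a priori} the defining relations of $C^*(F)$ could be strictly weaker than those inherited inside $C^*(E)$, collapsing some elements. The gauge-invariant uniqueness theorem resolves this, but only after confirming both that no vertex projection vanishes and that the restriction of the ambient gauge action furnishes the required equivariance. Crucially, entrance-completeness is precisely the hypothesis ensuring that relation (iii) \emph{matches exactly} inside $C^*(E)$ rather than becoming a genuine extra relation; without it, a vertex of $F$ receiving only some of its $E$-edges would force $p_v$ to dominate a proper subsum, breaking the correspondence.
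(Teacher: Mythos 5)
Your proof is correct: verifying that entrance-completeness makes the subfamily a genuine Cuntz--Krieger $F$-family (with relation (iii) being the only delicate point, since $r_F^{-1}(v)=r_E^{-1}(v)$ whenever $v$ receives an $F$-edge), then invoking universality and the gauge-invariant uniqueness theorem via the restricted ambient gauge action, is exactly the standard argument for this result. The paper itself states this theorem as an imported citation from \cite{JP02} without reproducing a proof, and your argument matches the one given in that source, so there is nothing to flag beyond the trivial observation that $F$ inherits row-finiteness from $E$ (needed to quote the row-finite gauge-invariant uniqueness theorem), which is immediate.
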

    
    \par Finally, we lay out some terminology regarding paths and trees. Let $E^{\leq\infty}$ denote the set of paths in $E$ that are either infinite backwards paths (e.g., $\mu_1\mu_2\mu_3\cdots$) or are finite paths starting at a source of $E$. A path $\mu \in E^{\leq\infty}$ is \textit{simple} if every vertex along $\mu$ only occurs once on $\mu$. This brings us to the following definition:
    \begin{defi}[{c.f. \cite[Section~1.1]{H01}}]
        Let $\mu \in E^{\leq\infty}$. A path $\nu\in E^*$ is a \textit{detour} for $\mu$ if $s(\nu)$ and $r(\nu)$ lie on $\mu$.
    \end{defi}
    \noindent Note that finite subpaths of $\mu$ are detours for $\nu$. We will say that a detour $\nu$ is \textit{distinct} if it contains an edge that is not contained in $\mu$.
    \par In-trees will also play a major role in our proof, so we remind the reader of their definition below. Recall that a \textit{rooted tree} is a tree with a distinguished vertex $v$ such that every other vertex has a path to $v$ on the underlying undirected graph.
    \begin{defi}[{\cite[Page~207]{N74}}]
	   A rooted, directed tree $(F,v)$ is an \textit{in-tree} if every directed edge has an orientation pointing towards $v$.
    \end{defi}

\section{\texorpdfstring{$\cZ$}{Z}-stable AF Graph Algebras}\label{afgraph}

We begin with some general results regarding $\cD$-stability, for any strongly self-absorbing $C^*$-algebra $\cD$. The following proposition is one of our key tools for understanding $\cD$-stability for infinite graphs, allowing us to reduce the problem to the setting of unital $C^*$-algebras. The proof is analogous to that of \cite[Proposition~6.1]{S15}.
\begin{prop}\label{corners}
	Suppose $A$ is a $C^*$-algebra and $(p_n)_{n \in \bN}\subset M(A)$ is a countable collection of pairwise orthogonal projections such that $\sum_{n \in \bN} p_n=1$, where convergence is taken in the strict topology. Then, for any strongly self-absorbing $C^*$-algebra $\cD$, $A$ is $\cD$-stable if and only if $p_nAp_n$ is $\cD$-stable for all $n \in \bN$.
\end{prop}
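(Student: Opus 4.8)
The plan is to handle the two implications separately, sending the harder one through a reduction to finite unital corners and then to a two-corner statement that I can settle purely with the permanence properties of $\cD$-stability. The forward direction is immediate: each $p_nAp_n$ is a corner, hence a hereditary subalgebra, of $A$, so $\cD$-stability descends to it by \cite{TW07}. For the converse, the first step is to pass to finite unital corners. Writing $q_N=\sum_{n=1}^N p_n$, strict convergence of $\sum_n p_n=1$ gives $\norm{q_Naq_N-a}\to 0$ for every $a\in A$, so $A=\varinjlim_N q_NAq_N$ along the inclusions $q_NAq_N\into q_{N+1}Aq_{N+1}$. Since $\cD$-stability is preserved under inductive limits \cite{TW07}, it suffices to show each unital corner $q_NAq_N$ is $\cD$-stable. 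Its diagonal corners are exactly $p_i(q_NAq_N)p_i=p_iAp_i$ for $i\le N$ (using $p_iq_N=p_i$), which are $\cD$-stable by hypothesis. I would then induct on $N$, the case $N=1$ being the hypothesis, where the inductive step follows from the two-corner statement below applied with $e=q_{N-1}$ (so $eBe=q_{N-1}Aq_{N-1}$ is $\cD$-stable by induction) and $f=p_N$ (so $fBf=p_NAp_N$ is $\cD$-stable by assumption).

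\emph{Two-corner claim.} If $B$ is unital and $e$ is a projection with $f:=1-e$ such that both $eBe$ and $fBf$ are $\cD$-stable, then $B$ is $\cD$-stable. To prove this I would pass to the ideal $I:=\clo{BeB}$ generated by $e$. Using $e^2=e$ one checks $beb'=(be)\,e\,(eb')$ with $be,eb'\in I$, so $\clo{IeI}=I$; that is, $e$ is full in $I$, and since $exe=e(exe)e$ with $exe\in I$ one gets $eBe=eIe$, a full corner of the separable (hence $\sigma$-unital) ideal $I$. By \cite{B77} a full corner is stably isomorphic to the ambient ideal, so $eBe\otimes\cK\cong I\otimes\cK$; as $\cD$-stability passes between an algebra and its stabilization \cite{TW07}, $\cD$-stability of $eBe$ forces $I$ to be $\cD$-stable. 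For the quotient, the block decomposition $B=eBe+eBf+fBe+fBf$ together with $eBe,eBf,fBe\subseteq I$ shows the quotient map $\pi\colon B\to B/I$ annihilates every block except $fBf$, so $\pi|_{fBf}\colon fBf\to B/I$ is surjective and $B/I$ is a quotient of the $\cD$-stable algebra $fBf$, hence $\cD$-stable. Applying the ideal-quotient permanence of \cite{TW07} to $0\to I\to B\to B/I\to 0$ then yields that $B$ is $\cD$-stable.

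The step I expect to carry the real content is the two-corner claim, and the point is precisely to avoid the naive route. One is tempted to combine the unital embeddings $\cD\into (p_iAp_i)_\omega\cap(p_iAp_i)'$ into a single map $d\mapsto\sum_i\phi_i(d)$ into $B_\omega$; this is unital and commutes with the diagonal, but there is no reason for it to commute with the off-diagonal corners $p_iAp_j$, and arranging that compatibility directly is the genuine obstacle. Passing instead to $\clo{BeB}$ and exploiting full-corner (Morita) invariance sidesteps the off-diagonal interaction entirely, reducing everything to permanence properties already in hand. Throughout I use that $A$, and hence each corner, ideal, and quotient above, is separable, as the cited results require; this holds in all the intended applications to countable graphs. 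This argument parallels \cite[Proposition~6.1]{S15}.
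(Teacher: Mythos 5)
Your overall strategy coincides with the paper's: the forward direction by hereditary permanence, the converse by passing to the finite corners $q_NAq_N$, an induction whose engine is a two-projection lemma proved with Brown's stable isomorphism theorem plus extension permanence, and inductive limit permanence to finish. However, there is a genuine flaw in how you set up the induction: the projections $p_n$ lie in $M(A)$, not in $A$, so the corners $q_NAq_N$ need \emph{not} be unital, and your two-corner claim --- stated for a unital $B$ with a projection $e\in B$ --- does not apply to $B=q_NAq_N$ with $e=q_{N-1}$ and $f=p_N$, which are only multiplier projections of $B$. Concretely, take $A=C_0(\mathbb{R})\otimes\cK$ and $p_n=1\otimes e_{nn}$: then $\sum_n p_n = 1$ strictly, but $q_NAq_N\cong C_0(\mathbb{R})\otimes M_N$ has no unit. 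The repair is routine but must actually be made: restate the claim for projections $e,f\in M(B)$ with $e+f=1_{M(B)}$. Then the containments you use silently --- $be,\,eb'\in I:=\clo{BeB}$, the blocks $eBe,\,eBf,\,fBe\subseteq I$, and fullness of $eBe=eIe$ in $I$ --- each require an approximate-unit argument (e.g.\ $ebf=\lim_i (h_ie)(bf)$ with $(h_ie)(bf)\in BeB$, where $(h_i)$ is an approximate unit for $B$); when $e\notin B$ none of them is automatic. This is precisely the device the paper employs when it proves $ApA+AqA=A$. In the intended graph-algebra application your original version happens to suffice, since vertex projections lie in $C^*(E)$ itself, but the proposition as stated concerns multiplier projections.

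Modulo that repair, your two-corner argument is essentially the paper's, with one small genuine difference that works in your favor: you identify $B/I$ as a quotient of $fBf$ directly from the block decomposition $B=eBe+eBf+fBe+fBf$, so Brown's theorem is invoked only once (for $I$). The paper instead proves $ApA+AqA=A$ with an approximate unit and uses the isomorphism $A/ApA\cong AqA/(AqA\cap ApA)$, so it applies Brown's theorem to both $ApA$ and $AqA$. Both routes then conclude identically via extension permanence and inductive limits. You are also right to flag separability ($\sigma$-unitality), which both Brown's theorem and the cited permanence results from \cite{TW07} require; the paper leaves this hypothesis implicit.
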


\begin{proof}
	The forward direction is obvious since $\cD$-stability passes to hereditary subalgebras by Theorem~\ref{dstabperm}\ref{dstabpermher}. To prove the reverse direction, we first consider the case where $p,q \in M(A)$ are projections such that $p+q=1$ and both $pAp$ and $qAq$ are $\cD$-stable. Then $ApA$ is stably isomorphic to $pAp$ by \cite[Theorem~2.8]{B77}, as $pAp$ is full in $ApA$. Similarly, $AqA$ and $qAq$ are stably isomorphic. Hence $ApA$ and $AqA$ are $\cD$-stable, as $\cD$-stability is preserved by stable isomorphism by Theorem~\ref{dstabperm}\ref{dstabpermstable}. As in \cite{S15}, we show that $ApA + AqA =A$. Fix an approximate unit $(h_i)_{i \in I} \subset A$. For each $a \in A$, observe that
	$$ \lim_{i \in I} (h_ipa + h_iqa)=pa+qa=(p+q)a = a$$
	Since the terms in the net $h_ipa +h_iqa$ belong to $ApA + AqA$, we conclude that $a \in ApA + AqA$, and so $ApA+AqA=A$. 
	\par Now consider the sequence
	\begin{center}
		\begin{tikzcd}
			0 \arrow[r] & ApA \arrow[r] & A \arrow[r] & AqA/(AqA \cap ApA) \arrow[r] & 0
		\end{tikzcd}
	\end{center}
	which is exact since $A/ApA =(ApA + AqA)/ApA \cong AqA/(AqA \cap ApA)$. The quotient \mbox{$AqA/(AqA \cap ApA)$} is $\cD$-stable, as it is a quotient of the $\cD$-stable algebra $AqA$ by Theorem~\ref{dstabperm}\ref{dstabpermideal}. We conclude that $A$ is $\cD$-stable since $\cD$-stability is preserved when taking extensions, also by Theorem~\ref{dstabperm}\ref{dstabpermideal}.
	\par We now return to the original case. Define $q_n$ to be the partial sum $q_n\coloneq p_1 + \cdots + p_n$. By assumption, $q_n \to 1$ strictly in $M(A)$, so we have that $q_naq_n \to a$ for all $a \in A$. This allows us to conclude that $\varinjlim q_nAq_n = A$. We see that $q_1Aq_1=p_1Ap_1$ is $\cD$-stable by assumption. For our inductive step, suppose that $q_nAq_n$ is $\cD$-stable. Since $q_{n+1}=q_n+p_{n+1}$, and both $q_nAq_n$ and $p_{n+1}Ap_{n+1}$ are $\cD$-stable, it follows that $q_{n+1}Aq_{n+1}$ is $\cD$-stable by the first half of this proof, since $q_{n+1}$ is the identity of $q_{n+1}Aq_{n+1}$. As direct limits preserve $\cD$-stability by Theorem~\ref{dstabperm}\ref{dstabpermlim}, we conclude that $A$ is $\cD$-stable.
\end{proof}

The following corollary applies Proposition~\ref{corners} to graph $C^*$-algebras.

\begin{cor}\label{graphcorners}
	Let $\cD$ be a strongly self-absorbing $C^*$-algebra, and let $E$ be a graph with countably many vertices. Then $C^*(E)$ is $\cD$-stable if and only if $p_vC^*(E)p_v$ is $\cD$-stable for all $v \in E^0$.
\end{cor}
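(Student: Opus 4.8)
The plan is to apply Proposition~\ref{corners} to $A = C^*(E)$, taking the vertex projections $\{p_v\}_{v \in E^0}$ as the orthogonal family. Three hypotheses must be checked: that the $p_v$ are pairwise orthogonal projections lying in $M(C^*(E))$, that they sum to $1$ strictly, and that the index set is countable. The first is immediate from Definition~\ref{CKdef}, since in any Cuntz--Krieger family the vertex projections are mutually orthogonal projections, and they sit inside $C^*(E) \subseteq M(C^*(E))$. Countability is exactly the hypothesis $\abs{E^0} \leq \aleph_0$, so I would fix an enumeration $E^0 = \{v_n : n \in \bN\}$ and set $p_n := p_{v_n}$.

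The only substantive step is to verify that $\sum_n p_n = 1$ in the strict topology on $M(C^*(E))$, i.e.\ that the increasing partial sums $q_N := \sum_{n \leq N} p_{v_n}$ satisfy $q_N a \to a$ and $a q_N \to a$ in norm for every $a \in C^*(E)$. First I would check this on the dense spanning set $\{s_\mu s_\nu^* : \mu,\nu \in E^*,\ s(\mu)=s(\nu)\}$ of $L_\bC(E)$. The Cuntz--Krieger relations of Definition~\ref{CKdef} give $p_{r(\mu)} s_\mu = s_\mu$, whence by orthogonality $p_v s_\mu = p_v p_{r(\mu)} s_\mu = 0$ for $v \neq r(\mu)$; thus $p_v(s_\mu s_\nu^*)$ equals $s_\mu s_\nu^*$ when $v = r(\mu)$ and $0$ otherwise. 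Consequently $q_N (s_\mu s_\nu^*) = s_\mu s_\nu^*$ as soon as $r(\mu) \in \{v_1,\dots,v_N\}$, and symmetrically $(s_\mu s_\nu^*) q_N = s_\mu s_\nu^*$ once $r(\nu)$ appears among the first $N$ vertices. A standard $\varepsilon/3$ argument, using $\norm{q_N} = 1$ together with the density of $L_\bC(E)$, then upgrades this to $q_N a \to a$ and $a q_N \to a$ for all $a \in C^*(E)$. Since the $q_N$ are increasing and each spanning element is eventually fixed, the conclusion is independent of the chosen enumeration, so $\sum_n p_n = 1$ strictly.

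With the hypotheses of Proposition~\ref{corners} in place, that proposition yields at once that $C^*(E)$ is $\cD$-stable if and only if $p_{v_n} C^*(E) p_{v_n}$ is $\cD$-stable for every $n$, which is precisely the assertion. I expect the strict-convergence computation to be the only real point of substance, and even it is routine given the Leavitt path algebra description. One minor case deserves a remark: when $E^0$ is finite, $C^*(E)$ is unital and $\sum_v p_v = 1$ is a genuine finite sum, so the corollary follows instead from finitely many applications of the two-projection case established in the first part of the proof of Proposition~\ref{corners}.
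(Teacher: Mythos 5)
Your proposal is correct and follows exactly the paper's route: the paper's entire proof is ``Apply Proposition~\ref{corners} to the countable collection of pairwise orthogonal projections $(p_v)_{v \in E^0}$,'' leaving the strict convergence $\sum_v p_v = 1$ in $M(C^*(E))$ as a standard, implicit fact. Your verification of that convergence on the spanning elements $s_\mu s_\nu^*$ and the handling of the finite-vertex case are accurate fillings-in of details the paper omits, not a different argument.
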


\begin{proof}
	Apply Proposition~\ref{corners} to the countable collection of pairwise orthogonal projections $(p_v)_{v \in E^0}$.
\end{proof}

The following definition is our main hypothesis for $\cZ$-stability. Conceptually, one can think of this as a divisibility condition on vertex projections in $C^*(E)$. 

\begin{defi}\label{detourcond}
	We say a row-finite graph $E$ \textit{has distinct detours} if every simple path $\mu \in E^{\leq\infty}$ has a distinct detour $\nu \in E^*$.
\end{defi}

\begin{rmk}\label{detournosources}
    Observe that any row-finite graph $E$ having distinct detours necessarily does not have any sources. For, if $v \in E^0$ is a source, then $v$ is a path of length $0$ in $E^{\leq\infty}$. But since $v$ is a source, this path is the unique path from $v$ to $v$. Thus no distinct detour can exist.
\end{rmk}

We first wish to show that this property is necessary for $C^*(E)$ to have no elementary subquotients. As a prerequisite, we need to know that the presence of elementary subquotients is preserved by stable isomorphism.

\begin{lem}\label{stabsubquo}
    Let $A$ be a $C^*$-algebra. Then $A$ has an elementary subquotient if and only if $A \otimes \cK$ has an elementary subquotient.
\end{lem}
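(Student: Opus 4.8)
The plan is to prove both directions of the equivalence by exploiting the relationship between subquotients of $A$ and subquotients of $A \otimes \cK$, together with the fact that tensoring with $\cK$ only changes a $C^*$-algebra up to stable isomorphism. First I would establish the forward direction: if $A$ has an elementary subquotient, say $I/J \cong \cK(\cH)$ for some ideals $J \trianglelefteq I \trianglelefteq A$ and Hilbert space $\cH$, then tensoring the whole picture with $\cK$ gives ideals $J \otimes \cK \trianglelefteq I \otimes \cK \trianglelefteq A \otimes \cK$ with $(I \otimes \cK)/(J \otimes \cK) \cong (I/J) \otimes \cK \cong \cK(\cH) \otimes \cK \cong \cK(\cH \otimes \ell^2(\bN))$, which is again elementary. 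Here I would rely on the exactness of the minimal tensor product by $\cK$ (equivalently, nuclearity of $\cK$), which guarantees that $\cdot \otimes \cK$ sends the short exact sequence $0 \to J \to I \to I/J \to 0$ to a short exact sequence and that $I \otimes \cK$ and $J \otimes \cK$ are genuinely ideals of $A \otimes \cK$.

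\par For the reverse direction, I would start with an elementary subquotient of $A \otimes \cK$ and push it back down to $A$. The cleanest route is to observe that the ideal structure of $A$ and of $A \otimes \cK$ are in bijection: the maps $K \mapsto K \otimes \cK$ and $L \mapsto \{a \in A : a \otimes e_{11} \in L\}$ (for a fixed rank-one projection $e_{11} \in \cK$) are mutually inverse lattice isomorphisms between the ideals of $A$ and those of $A \otimes \cK$, and this isomorphism respects the corresponding subquotients up to tensoring by $\cK$. Thus any subquotient $L'/L$ of $A \otimes \cK$ is stably isomorphic to a corresponding subquotient $B'/B$ of $A$, with $(B'/B) \otimes \cK \cong L'/L$. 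If $L'/L \cong \cK(\cH)$, then $B'/B$ is a $C^*$-algebra that becomes elementary after tensoring with $\cK$; I must then argue that $B'/B$ is itself elementary. This follows because $\cK(\cH)$ is stably isomorphic only to elementary algebras: if $D \otimes \cK \cong \cK(\cH)$ then $D$ is a full corner of $\cK(\cH)$ via a projection, and a corner of an elementary algebra determined by a projection is again elementary (it is $\cK(\cH_0)$ for $\cH_0$ the range of the projection), so $D$ is elementary.

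\par The main obstacle I anticipate is handling the lattice isomorphism of ideals correctly and making sure the subquotient really is preserved as a tensor product by $\cK$, rather than merely being stably isomorphic in some loose sense. In particular, one must be careful that an arbitrary ideal $L \trianglelefteq A \otimes \cK$ has the form $B \otimes \cK$ for a genuine ideal $B \trianglelefteq A$; this is a standard fact (every ideal of $A \otimes \cK$ is of the form $B \otimes \cK$), but it is exactly what makes the correspondence of subquotients clean, and I would want to cite or prove it carefully. The other delicate point is the final step of the reverse direction, showing that stable isomorphism to $\cK(\cH)$ forces elementarity; the key input is that $\cK(\cH)$ has, up to isomorphism, a unique nonzero corner structure governed by the dimension of the range projection, so any $D$ with $D \otimes \cK \cong \cK(\cH)$ is forced to be $\cK(\cH_0)$ for some closed subspace $\cH_0$.

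\par In summary, both implications reduce to two principles: that $\cdot \otimes \cK$ is an exact functor establishing a bijection of ideals (hence of subquotients up to stabilization), and that the class of elementary algebras is closed under both stabilization and the inverse operation of passing to a corner. Once these are in hand, the equivalence is immediate, and I expect the write-up to be short.
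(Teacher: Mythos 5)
Your proposal is correct and takes essentially the same route as the paper: the forward direction tensors the exact sequence $0 \to J \to I \to I/J \to 0$ by $\cK$, and the reverse direction rests on the same structural fact that every ideal of $A \otimes \cK$ has the form $I \otimes \cK$ for an ideal $I \trianglelefteq A$ (the paper cites Brown--Ozawa, Corollary~9.4.6), so that subquotients of $A \otimes \cK$ are exactly the stabilized subquotients of $A$. The only divergence is the final de-stabilization step, where the paper concludes that $I/J$ is Morita equivalent to $\bC$ and hence elementary (citing Rieffel), whereas you argue directly that $D \cong (1 \otimes e_{11})(D \otimes \cK)(1 \otimes e_{11})$ is the compression of $\cK(\cH)$ by a projection $p$ --- note this is a projection in the multiplier algebra $B(\cH)$, not in $\cK(\cH)$ itself --- giving $D \cong \cK(p\cH)$; both justifications are valid, yours being marginally more self-contained.
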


\begin{proof}
    We begin with the forward direction. Note that if $I\trianglelefteq A$, then $I \otimes \cK \trianglelefteq A \otimes \cK$. Thus, if $B\cong I/J$ is an elementary subquotient of $A$, then $B \otimes \cK\cong (I/J)\otimes \cK\cong (I \otimes \cK)/(J\otimes \cK)$ is an elementary subquotient of $A \otimes \cK$. 
    \par For the reverse direction, note that by Corollary 9.4.6 of \cite{BO08}, ideals of $A \otimes \cK$ have the form $I \otimes \cK$ for an ideal $I \trianglelefteq A$. Therefore, a subquotient of $A \otimes \cK$ has the form $(I/J)\otimes \cK$ for ideals $J \trianglelefteq I\trianglelefteq A$. Thus, if $(I/J)\otimes \cK$ is an elementary $C^*$-algebra, then $I/J$ is Morita equivalent (\cite[Definition~6.10]{R74}) to $\bC$, whence $I/J \cong \cK(\cH)$ for some Hilbert space $\cH$.
\end{proof}

We can now prove that that lacking elementary subquotients implies the existence of distinct detours.

\begin{lem}\label{detournec}
    Suppose $E$ is a row-finite graph such that $C^*(E)$ has no elementary subquotients. Then $E$ has distinct detours.
\end{lem}

\begin{proof}
    We prove the contrapositive. Suppose that $\mu\in E^{\leq\infty}$ is a simple path without a distinct detour. Note that subpaths of $\mu$ are the unique paths connecting any pair of vertices on $\mu$. Let $H\subset E^0$ be the smallest hereditary saturated subset containing the vertices of $\mu$. The corresponding graph $E_H$ contains $\mu$ as a path, and $C^*(E_H)$ is stably isomorphic to the ideal $I_H$. Now, define $S$ to be the set of vertices in $(E_H)^0\setminus \mu$ that have a path to a vertex on $\mu$. Since $\mu$ does not have a distinct detour in $E$, it also does not have one in $E_H$. Therefore, $S$ must be hereditary (else, any hereditary subset containing $S$ would intersect $\mu$). Let $K$ be the smallest saturated set containing $S$, which is also hereditary by \cite[Remark~4.11]{R05}. Note that $K$ again does not contain any vertices of $\mu$ as $E_H^0\setminus \mu$ is saturated and contains $S$. We then have $C^*(E_H\setminus K) \cong C^*(E_H)/I_K$, and $E_H\setminus K$ contains $\mu$. Let $p \in M(C^*(E_H\setminus K))$ be the sum of the vertex projections on $\mu$, where convergence is taken in the strict topology. By the construction of $K$, no vertices on $\mu$ receive edges from vertices that are not also on $\mu$. We then have $pC^*(E_H\setminus K)p \cong M_{\abs{\mu}}$, with the convention that $M_\infty = \cK$. By \cite[Theorem~2.8]{B77}, $pC^*(E_H\setminus K)p$ is stably isomorphic to $C^*(E_H\setminus K)$. Therefore, $C^*(E_H\setminus K)$ is an elementary $C^*$-algebra, which, in particular, is a subquotient of $C^*(E_H)$. By Lemma~\ref{stabsubquo}, it follows that $I_H\otimes \cK$ has an elementary subquotient, as does $I_H$. As ideals of $I_H$ are also ideals of $C^*(E)$, we can finally conclude that $C^*(E)$ has an elementary subquotient.
\end{proof}

This also leads to the following easy corollary, using Theorem~\ref{dstabperm}\ref{dstabpermideal} and the fact that elementary $C^*$-algebras are not $\cD$-stable.

\begin{cor}
    Let $\cD$ be a strongly self-absorbing $C^*$-algebra, and suppose $E$ is a row-finite graph such that $C^*(E)\otimes \cD \cong C^*(E)$. Then $E$ has distinct detours.
\end{cor}

We will now investigate this property in the case of AF graph algebras. In order to apply \cite[Theorem~1.2]{RT17}, we will use inclusions of in-trees to build systems of approximately central matrix units. We will need the subalgebra generated by the matrix units to be unital, which is proven by the following lemma. This lemma is well-known, but a proof is included here for convenience.

\begin{lem}\label{unitalmatrix}
    Let $(F,v)$ be a finite in-tree. Let $\Lambda_v$ denote the set of paths from a source of $F$ to $v$. Then in $C^*(F)$, we have 
    $$p_v=\sum_{\lambda \in \Lambda_v} s_\lambda s_\lambda^*$$
\end{lem}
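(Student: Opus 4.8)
The plan is to argue by induction on the number of vertices $\abs{F^0}$, peeling off the root $v$ by repeated use of the third Cuntz--Krieger relation of Definition~\ref{CKdef}. For the base case $\abs{F^0}=1$, the tree $F$ consists of the single vertex $v$, which is then a source; the only element of $\Lambda_v$ is the trivial path at $v$, for which $s_v=p_v$, and the asserted identity $p_v=s_vs_v^*=p_v$ is immediate. For the inductive step I would first observe that, since every vertex of the rooted tree $(F,v)$ admits a directed path to $v$ (the undirected path guaranteed by the rooted-tree structure is directed, as all edges of the in-tree point toward $v$), whenever $\abs{F^0}>1$ there is at least one edge with range $v$. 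Thus $0<\abs{r^{-1}(v)}<\infty$ and the Cuntz--Krieger relation yields $p_v=\sum_{r(e)=v}s_es_e^*$.

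Next, using $s_e^*s_e=p_{s(e)}$, hence $s_e=s_ep_{s(e)}$, I would rewrite each summand as $s_es_e^*=s_ep_{s(e)}s_e^*$. For each edge $e$ with $r(e)=v$, let $F_{s(e)}$ denote the subgraph of $F$ induced on the set of vertices admitting a directed path to $s(e)$. This is again a finite in-tree, now rooted at $s(e)$, and because $F$ is a tree it has strictly fewer vertices than $F$, the root $v$ being excluded (in an in-tree $v$ has no outgoing edges, so it is a predecessor of no vertex). Applying the inductive hypothesis to $(F_{s(e)},s(e))$ gives $p_{s(e)}=\sum_{\lambda\in\Lambda_{s(e)}}s_\lambda s_\lambda^*$, where $\Lambda_{s(e)}$ is the set of paths in $F_{s(e)}$ from a source to $s(e)$. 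Substituting and using $s_es_\lambda=s_{e\lambda}$, I obtain $p_v=\sum_{r(e)=v}\sum_{\lambda\in\Lambda_{s(e)}}s_{e\lambda}s_{e\lambda}^*$.

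To conclude I would identify this double sum with $\sum_{\gamma\in\Lambda_v}s_\gamma s_\gamma^*$ by showing that $(e,\lambda)\mapsto e\lambda$ is a bijection onto $\Lambda_v$. This rests on two points. First, $F_{s(e)}$ contains every $F$-edge into each of its vertices (it is entrance-complete in $F$, since any predecessor of a predecessor of $s(e)$ is itself a predecessor of $s(e)$), so the range-fibres computed in $F_{s(e)}$ agree with those in $F$ and a source of $F_{s(e)}$ is precisely a source of $F$ lying in it; this guarantees $e\lambda$ genuinely starts at a source of $F$. Second, because $v$ is not a source, every $\gamma\in\Lambda_v$ has a last edge $e$ with $r(e)=v$ and factors uniquely as $\gamma=e\lambda$ with $\lambda$ a path from a source to $s(e)$, the uniqueness coming from the uniqueness of paths to $v$ in a tree. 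The $C^*$-algebraic manipulations here are routine; I expect the one point requiring genuine care to be this reindexing bijection, specifically the verification that the sources of the subtrees $F_{s(e)}$ coincide with those of $F$ and that the factorization $\gamma=e\lambda$ is unique, and I regard that bookkeeping, rather than any operator-algebraic subtlety, as the main obstacle.
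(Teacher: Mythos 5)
Your proof is correct, and it runs the induction in the opposite direction from the paper's. The paper inducts on the maximum length of a path in $F$ and peels off the \emph{leaves}: it deletes the sources $u_1,\dots,u_m$ of $F$, applies the inductive hypothesis to $G = F\setminus\{u_1,\dots,u_m\}$, and then invokes the Cuntz--Krieger relation at the sources of $G$. You instead induct on $\abs{F^0}$ and peel off the \emph{root}, expanding $p_v=\sum_{r(e)=v}s_es_e^*$ and recursing on the predecessor subtrees $F_{s(e)}$. This is a genuinely different decomposition, and it buys real safety: as you observe, each $F_{s(e)}$ is entrance-complete in $F$, so its Cuntz--Krieger relations hold verbatim among the corresponding generators of $C^*(F)$, the inductive identity transfers into $C^*(F)$, and the sources of $F_{s(e)}$ are exactly the sources of $F$ that it contains --- which is precisely what makes your reindexing bijection work. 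The paper's subgraph $G$, by contrast, is \emph{not} entrance-complete in $F$: a vertex of $G$ may still receive $F$-edges from deleted sources (take $F$ with edges $a\to b\to c\to v$ and $x\to c$; then $c$ is not a source of $G$ yet receives the edge from $x$). For such trees the inductive identity for $(G,v)$ does not hold among the corresponding elements of $C^*(F)$, and applying relation (iii) of Definition~\ref{CKdef} only at the sources of $G$, as the paper's proof states, would miss the path $x\to c\to v$ of $\Lambda_v$; the leaf-peeling recursion is airtight only when all source-to-root paths have equal length. So your root-first argument is not merely a correct alternative: the entrance-completeness point you single out as the crux is exactly what the paper's own (admittedly well-known) proof glosses over, and your version handles the general, unbalanced case without further patching.
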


\begin{proof}
    Since $F$ is a finite graph, the lengths of paths in $F$ are uniformly bounded by some maximum length. We therefore induct on the maximum lengths of paths. In the base case, we consider an in-tree such that each path has length at most one. Then the result is simply Definition~\ref{CKdef}\ref{CK2}. For our inductive step, suppose the result holds for any finite in-tree whose paths are at most length $n$. Suppose $(F,v)$ is a finite in-tree such that all paths have length at most $n+1$. Let $u_1, u_2, \dots, u_m$ be the sources of $F$, and set $G\coloneq F \setminus\{ u_1,u_2,\dots,u_m\}$. Then $(G, v)$ is a finite in-tree such that all paths have length at most $n$. Using the inductive hypothesis and applying Definition~\ref{CKdef}\ref{CK2} to the sources of $G$ yields the desired result.
\end{proof}

When we construct our approximately central matrix units, we need to avoid one-dimensional summands. These summands are avoided whenever our inclusion of in-trees satisfies the following definition:

\begin{defi}
	Let $(F,v) \subset (F',v)$ be an inclusion of finite in-trees. We say that $(F,v) \subset (F',v)$ is \textit{nondegenerate} if for all sources $u \in F$ and $w\in F'$, the number of paths from $w$ to $u$ that do not otherwise intersect $F$ belongs to the set $\{0, 2, 3, \dots\}$. Otherwise, we say that $(F,v) \subset (F',v)$ is \textit{degenerate}.
\end{defi}

\begin{rmk}\label{nondegensources}
    Observe that, if $(F,v)\subset (F',v)$ is a nondegenerate inclusion, $F$ and $F'$ have no common sources. If $w \in F^0$ is a source in both $F$ and $F'$, then there is a unique path in $F'$ from $w$ (as a source of $F'$) to $w$ (as a source of $F$), contradicting the nondegeneracy of the inclusion.
\end{rmk}

The following lemma will be important for understanding the structure of paths between sources in nondegenerate conclusions.

\begin{lem}\label{Nijnot0}
    Let $(F,v)\subset (F',v)$ be a nondegenerate inclusion, and let $u$ be a source of $F'$. Then there is a source $w \in F^0$ and paths $\mu$ and $\nu$ from $u$ to $w$ such that $\mu$ and $\nu$ only intersect $F$ at $r(\mu)=r(\nu)=w$.
\end{lem}

\begin{proof}
    Fix a source $w_1 \in F^0$ and a path $\nu_1$ from $u$ to $w_1$. If there are paths $\mu$ and $\nu$ from $u$ to $w_1$ satisfying the lemma, then we are done. Otherwise, because the inclusion $(F,v)\subset (F',v)$ is nondegenerate, we can find a second path $\mu_1\neq \nu_1$ from $u$ to $w_1$. As $w_1$ does not satisfy the conclusion of the lemma, at least one of $\nu_1$ and $\mu_1$ will reach a vertex of $F$ other than $w$; without loss of generality, suppose $\mu_1$ has this property. Let $w_2$ be the first vertex of $F$ reached by $\mu_1$. Note that $w_2$ is necessarily a source of $F$ since it is the first vertex of $F$ on $\mu_1$. If there are paths to $w_2$ satisfying the lemma, we are done; else, factor the path $\mu_1$ as $\gamma_1\nu_2$, where $\gamma_1$ is a path from $w_2$ to $w_1$ and $\nu_2$ is from $u$ to $w_2$. Now repeat the above argument at $w_2$ using the path $\nu_2$. Proceeding inductively, we produce sources $w_1,w_2,\dots,w_n \in F$ and paths $\gamma_i$ from $w_{i+1}$ to $w_i$ for $1\leq i <n$. This process must terminate because $F$ is a finite tree, and so has finitely many sources and no cycles. Thus we obtain a source $w_k$ in $F^0$ satisfying the conclusion of the lemma.
\end{proof}

As we will want our system of approximately computing matrix units to lack one-dimensional representations, we need to be able to find a nondegenerate inclusion for any given in-tree.

\begin{lem}\label{nondegenincl}
	Let $E$ be a row-finite graph with no cycles that has distinct detours. Let $(F,v)$ be a finite in-tree contained in $E$. Then there exists an entrance-complete finite in-tree $(F',v)$ contained in $E$ such that $(F,v) \subset (F', v)$ is a nondegenerate inclusion.
\end{lem}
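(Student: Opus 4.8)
The plan is to construct $F'$ by prepending, at each source of $F$, just enough of $E$ to force a genuine branching, and then enlarging the result to an entrance-complete in-tree while keeping the in-tree structure intact. Since $F$ is finite it has only finitely many sources, and the inclusion is only required to be nondegenerate with respect to these, so any new sources created along the way need no further treatment.

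The single point at which the hypothesis enters is the following branch-finding claim: if $u$ is a vertex with $\abs{r^{-1}(u)}\geq 1$, then the forced backward walk from $u$ --- at each vertex of in-degree one pass to its unique predecessor --- reaches, after finitely many steps, a vertex $w$ with $\abs{r^{-1}(w)}\geq 2$. I would prove this by contradiction. If the walk never branches then, as $E$ has no cycles, it traces out a simple path $\mu\in E^{\leq\infty}$ (either infinite or terminating at a source of $E$) every vertex of which has in-degree at most one. A distinct detour $\nu$ for $\mu$ must leave and re-enter $\mu$; its last edge off $\mu$ has range on $\mu$, producing a vertex of $\mu$ that receives an edge not lying on $\mu$ and hence has more incoming edges than the walk permits --- a contradiction. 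So a branching vertex $w_u$ is reached in finitely many steps (with $w_u=u$ if $u$ already branches).

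With this claim I would assemble $F'$ as follows. For each source $u$ of $F$ with $r^{-1}(u)\neq\emptyset$, adjoin the forced backward path from $u$ to $w_u$ together with every edge of $r^{-1}(w_u)$, declaring the resulting predecessors of $w_u$ to be leaves; a source of $F$ that is also a source of $E$ receives no incoming edges, so no nonempty path ends there and its count is automatically $0$, and it is left as a leaf. I then take the entrance-complete closure, adjoining to each non-leaf vertex all of its remaining $E$-incoming edges with their sources as leaves. Finiteness follows from row-finiteness together with the finite reach to each $w_u$ and the fact that all newly adjoined predecessors are kept as leaves, so the closure terminates. Each opened source $u$ then has the two or more predecessors of $w_u$ above it, each a source of $F'$ whose path to $v$ first meets $F$ at $u$; this gives a count of at least two, while an unopened source gives $0$, so in every case the count avoids $1$ and the inclusion is nondegenerate.

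The main obstacle is confirming that the resulting object really is an in-tree, i.e. that no vertex is forced to have two outgoing edges. This could fail a priori: the entrance-complete closure forces incoming edges at non-leaf vertices whose sources may already point toward the root along a different edge, and the backward chains attached to two different sources could collide. The point is that distinct detours rules out exactly such rigid configurations, since a vertex forced to branch the wrong way yields a short simple path in $E^{\leq\infty}$ admitting no distinct detour. I expect most of the work to lie in turning this observation into a careful inductive construction --- processing the sources one at a time and choosing each backward chain and each branch to avoid the portion of $F'$ already built, using acyclicity --- and in checking that out-degree one is preserved at every stage.
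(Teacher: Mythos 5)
Your branch-finding claim is correct, and it is essentially the same use of the distinct-detours hypothesis that the paper makes (there one assumes every stage $F_n$ of the iterated entrance-complete closure of $F$ gives a degenerate inclusion, extracts an infinite backward path each of whose vertices has a \emph{unique} path down to a fixed source of $F$, and contradicts that uniqueness with a distinct detour). The rest of the proposal, however, does not prove the lemma, and the first problem is one you did not flag: you have misread the nondegeneracy condition. It is a \emph{per-pair} count: for each fixed source $w$ of $F'$ and each fixed source $u$ of $F$, the number of paths from $w$ to $u$ meeting $F$ only at $u$ must avoid the value $1$. Your verification instead aggregates over all sources of $F'$ lying above $u$. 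In your construction each new leaf $p \in r^{-1}(w_u)$ has \emph{exactly one} path down to $u$, namely the edge $p \to w_u$ followed by the forced chain, so every pair $(p,u)$ has count exactly $1$; your inclusion is degenerate, and these counts of $1$ are precisely the forbidden $N_{i,j}=1$ blocks in Lemma~\ref{centralmatrix}. Moreover this cannot be repaired while insisting that $F'$ be a genuine tree: a per-pair count of at least $2$ means two distinct paths from the same $w$ to the same $u$, i.e.\ paths that split and reconverge, while in an in-tree every vertex has out-degree at most one, so per-pair counts never exceed $1$. The nondegeneracy actually needed downstream forces reconvergence, hence forces the ambient object to be a rooted DAG rather than a tree; the paper's proof supplies this by iterating the \emph{full} closure and showing by contradiction (pigeonhole plus a distinct detour) that some finite stage has no count equal to $1$, rather than by stopping at a first branch and counting directly.

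The obstacle you do flag---that the construction may fail to be a tree---is real, but your proposed resolution is not available. There are no choices to make: the backward walk is forced at in-degree-one vertices, and entrance-completeness forces \emph{every} $E$-edge entering a non-leaf vertex of $F'$ into $F'$, so an offending edge can never be omitted or routed around. And distinct detours is fully compatible with forced collisions: let $E$ have edges $a \to v$, $b \to v$, $w \to a$, $w \to b$, together with an infinite ladder above $w$, namely vertices $c_i, c_i'$ with $c_1 \to w$, $c_1' \to w$ and all four edges from $\{c_{i+1}, c_{i+1}'\}$ to $\{c_i, c_i'\}$ for each $i$. This graph is countable, row-finite, acyclic, and has distinct detours (every infinite backward path admits a detour through the parallel ladder vertex, or through the other of $a,b$), yet for $F$ the in-tree with edges $a \to v$ and $b \to v$, the forced walks from $a$ and from $b$ both terminate at $w$; any $F'$ in which $a$ and $b$ are opened must contain both $w \to a$ and $w \to b$, hence the undirected cycle through $v,a,w,b$. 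So no ``careful inductive construction'' of the kind you describe exists, no matter how the hypothesis is deployed. (This example in fact shows that the tree assertion is also the weak point of the paper's own write-up---its closures $F_n$ are not trees here, and its sums over the sets $\Lambda_i$ of paths from $u_i$ to $v$ only make sense for DAGs---but the paper's mechanism of absorbing collisions into the full closure is the one that survives once ``in-tree'' is relaxed to a finite entrance-complete rooted subgraph, whereas branch-and-stop with literal trees does not.)
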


\begin{proof}
	By Remark~\ref{detournosources}, $E$ is an infinite graph since it has neither sources nor cycles. We inductively define a sequence of nested in-trees as follows. Set $(F_0,v)=(F,v)$. If we are given $(F_n,v)$, we define $(F_{n+1},v)$ as follows: Set $F_{n+1}^0=F_n^0 \cup s(r^{-1}(F_n^0))$ and $F_{n+1}^1=F_n^1 \cup r^{-1}(F_n^0)$, with the range and source maps for $F_{n+1}$ coming from $E$. It is clear from construction that $(F_n, v)_{n>0}$ is a sequence of entrance-complete in-trees. Furthermore, since $E$ has infinitely many vertices, none of which are sources (by Remark~\ref{detournosources}), each of these inclusions is proper. By way of contradiction, suppose that $(F,v)=(F_0,v) \subset (F_n,v)$ is a degenerate inclusion for all $n\geq1$. We claim we may find a sequence of sources $u_n \in F_n$ such that $u_0,u_1,u_2,\dots$ defines a path $\mu \in E^{\leq\infty}$ and each $u_n$ has a unique path to $u_0$ (namely, a subpath of $\mu$). 
	\par By our assumption that each inclusion $(F_0,v)\subset (F_n,v)$ is degenerate, it is clear that we may find at least one source in each $F_n$ ($n\geq 1$) with a unique path to a source in $F_0$.  Let $S \subset E^0$ be the set of all vertices that are a source in some $F_n$ and have a unique path to a source of $F_0$, noting $\abs{S}=\infty$. Letting $w_1, w_2, \dots, w_m$ be the sources of $F_0$, set $S_{w_i} \subset S$ to be the sources with a unique path to $w_i$. As $S=S_{w_1} \cup \cdots \cup S_{w_m}$, it follows that $S_{w_j}$ is infinite for some $j$. Set $u_0=w_j$. Now, repeat this process, but replace $S$ with $S_1$, the sources with a unique path to a vertex in $s(r^{-1}(u_0))$. Proceeding inductively in this manner produces a sequence of sources $(u_n)$. Furthermore, since $u_n \in s(r^{-1}(u_{n-1}))$, this sequence of vertices defines a path in $E^{\leq\infty}$ as desired (in fact, the path defined is unique since the vertices were constructed from degenerate inclusions).
	\par As $E$ has distinct detours, we may find a distinct detour $\nu$ of $\mu=\mu_1\mu_2\cdots$. Suppose that $s(\nu)=u_l$ and $r(\nu)=u_k$. Note that $k<l$ as $E$ has no cycles. We then see that $u_{l}$ has two distinct paths to $u_0$; namely $\mu_1\mu_2\cdots \mu_l$ and $\mu_1\mu_2\cdots \mu_k \nu$, which is a contradiction. We conclude that we may find an $N \in \bN$ such that $(F,v)\subset (F_N,v)$ is nondegenerate, and we set $F'=F_N$.
\end{proof}

We now have the necessary tools to build the desired system of matrix units that commute with $L_\bC(F)$ for a finite in-tree $(F,v)$.

\begin{lem}\label{centralmatrix}
    Let $E$ be a row-finite graph, and $(F,v)\subset E$ an entrance-complete finite in-tree. If $(F',v)\subset E$ is another entrance-complete finite in-tree such that $(F,v)\subset (F',v)$ is a nondegenerate inclusion, then we may construct a unital $\ast$-homomorphism $\phi\colon M_2\oplus M_3\to p_v L_\bC(F')p_v$ such that the range of $\phi$ commutes with $p_vL_\bC(F)p_v$.
\end{lem}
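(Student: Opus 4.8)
The plan is to realize $M_2\oplus M_3$ inside the relative commutant of $p_vL_\bC(F)p_v$ in $p_vL_\bC(F')p_v$, and to show that the nondegeneracy hypothesis is exactly the condition making this possible. Since $F$ and $F'$ are finite and acyclic, $C^*(F)$ and $C^*(F')$ are finite-dimensional and coincide with $L_\bC(F)$ and $L_\bC(F')$; every algebra in sight is thus a finite-dimensional corner and all computations can be carried out in the Leavitt path algebra using the product formula. The punchline I am aiming at is that the nondegeneracy condition forces every block of this relative commutant to have matrix size in $\{0,2,3,\dots\}$, which is precisely the numerical semigroup generated by $2$ and $3$, and this is exactly the set of sizes admitting a unital image of $M_2\oplus M_3$.

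First I would record the two corners explicitly. For a source $u$ of $F$, let $\Lambda_u$ be the set of paths in $F$ from $u$ to $v$; then $p_vL_\bC(F)p_v=\bigoplus_u M_{\abs{\Lambda_u}}$ with matrix units $e_{\alpha\beta}=s_\alpha s_\beta^*$ for $\alpha,\beta\in\Lambda_u$. The structural input I need is a factorization of the paths into $F'$ that reach $v$. Using entrance-completeness of $F$ — so that a path arriving from outside $F$ can only enter $F$ at a source of $F$ — together with the in-tree structure, every path $\eta$ from a source of $F'$ to $v$ factors uniquely as $\eta=\lambda\theta$, where $\lambda\in\Lambda_u$ for some source $u$ of $F$ and $\theta$ ranges over the set $\Theta_u$ of paths from sources of $F'$ to $u$ meeting $F$ only at $u$. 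Applying Lemma~\ref{unitalmatrix} to the sub-in-tree of $F'$ lying below $u$ gives $p_u=\sum_{\theta\in\Theta_u}s_\theta s_\theta^*$, and hence $s_\alpha=s_\alpha p_u=\sum_{\theta\in\Theta_u}s_{\alpha\theta}s_\theta^*$.

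Next I would compute the inclusion and identify the relative commutant. Combining the previous display with the product formula gives $e_{\alpha\beta}=\sum_{\theta\in\Theta_u}s_{\alpha\theta}s_{\beta\theta}^*$, exhibiting $p_vL_\bC(F)p_v\into p_vL_\bC(F')p_v$ as an amplification that is diagonal in the index $\theta$; note that pieces attached to different sources $w$ of $F'$ are orthogonal, since $s_\mu s_\nu^*=0$ whenever $s(\mu)\ne s(\nu)$. For $\theta,\theta'\in\Theta_u$ with $s(\theta)=s(\theta')=w$, set
\[
T_{\theta\theta'}=\sum_{\lambda\in\Lambda_u}s_{\lambda\theta}s_{\lambda\theta'}^*.
\]
A direct calculation with the product formula shows the $T_{\theta\theta'}$ form systems of matrix units, that each commutes with every $e_{\alpha\beta}$ (indeed $T_{\theta\theta'}e_{\alpha\beta}=s_{\alpha\theta}s_{\beta\theta'}^*=e_{\alpha\beta}T_{\theta\theta'}$), and that summing the diagonal units over all $(u,w)$ recovers $p_v$ because every $\eta$ decomposes uniquely as $\lambda\theta$. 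This identifies $R:=(p_vL_\bC(F)p_v)'\cap p_vL_\bC(F')p_v\cong\bigoplus_{u,w}M_{d_{u,w}}$, where $d_{u,w}=\abs{\{\theta\in\Theta_u:s(\theta)=w\}}$ and $1_R=p_v$.

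Finally I would invoke nondegeneracy: by definition each $d_{u,w}$ lies in $\{0,2,3,\dots\}$, the numerical semigroup generated by $2$ and $3$. Thus every nonzero block $M_{d_{u,w}}$ admits a unital $\ast$-homomorphism from $M_2\oplus M_3$ — write $d_{u,w}=2a+3b$ with $a,b\ge 0$ and send $(x,y)$ to $a$ copies of $x$ and $b$ copies of $y$ — and assembling these blockwise produces a unital $\ast$-homomorphism $\phi\colon M_2\oplus M_3\to R\subseteq p_vL_\bC(F')p_v$ whose range commutes with $p_vL_\bC(F)p_v$ by construction. I expect the main obstacle to be the careful bookkeeping of the factorization $\eta=\lambda\theta$ and the verification that the inclusion is a clean amplification; once that is in place, the concluding step is the elementary observation that $M_2\oplus M_3$ is chosen precisely so that its unital images realize exactly the matrix sizes in $\{0,2,3,\dots\}$, matching the nondegeneracy condition.
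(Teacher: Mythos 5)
Your proposal is correct and takes essentially the same route as the paper: your matrix units $T_{\theta\theta'}=\sum_{\lambda\in\Lambda_u}s_{\lambda\theta}s_{\lambda\theta'}^*$ are precisely the paper's $T_\mu T_\nu^*=\sum_{\lambda\in\Lambda_i}s_\lambda s_\mu s_\nu^* s_\lambda^*$, and the three pillars of the argument --- unitality of the system via Lemma~\ref{unitalmatrix} together with the unique factorization $\eta=\lambda\theta$ of paths from sources of $F'$, commutation with $p_vL_\bC(F)p_v$, and the final step writing each block size as $2a+3b$ using nondegeneracy --- coincide with the paper's proof. The only cosmetic differences are that you verify commutation through the amplification formula $e_{\alpha\beta}=\sum_{\theta}s_{\alpha\theta}s_{\beta\theta}^*$ and explicitly identify the relative commutant, whereas the paper checks commutation directly against the spanning elements $s_\gamma s_\eta^*$ and only exhibits the commuting subalgebra $C^*(\mathbb{M})$ it needs.
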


\begin{proof}
    Note that as $(F,v)$ is entrance complete, we may regard $L_\bC(F)$ as a subalgebra of $L_\bC(F')$ by \cite[Proposition~3.1]{JP02}, and thus $p_vL_\bC(F)p_v\subset p_vL_\bC(F')p_v$ is a unital inclusion. Now, let $\{u_1,u_2,\dots, u_n\}$ denote the sources of $F$ and $\{v_1,v_2,\dots, v_m\}$ the sources of $F'$. Recall that these two sets are disjoint by Remark~\ref{nondegensources}. Let $\Lambda_i$ denote the set of paths from $u_i$ to $v$. Then for each pair of paths $\mu,\nu$ from some $v_j$ to some $u_i$ such that $u_i$ is the only vertex of $F$ on $\mu$ and $\nu$, we define
    $$T_\mu T_\nu^*=\sum_{\lambda \in \Lambda_i}s_\lambda s_\mu s_\nu^* s_\lambda^*.$$
    We claim that the set 
    $$\mathbb{M}\coloneq \Big\{T_\mu T_\nu^*:s(\mu)=s(\nu)=v_j, r(\mu)=r(\nu)=u_i, \mu \cap F = \nu \cap F = \{u_i\}\Big\}$$
    forms a system of matrix units. Given two paths $\mu, \nu$ originating at sources of $F'$ with $r(\mu)=r(\nu)=v$, observe that $s_\nu^*s_\mu=p_{s(\mu)}$ if $\mu=\nu$ and $s_\nu^* s_\mu=0$ otherwise. Furthermore, paths $\gamma$ from a source in $F'$ to $v$ uniquely factorize as $\lambda \mu$ for some paths $\lambda$ and $\mu$ so that $\mu \cap F=r(\mu)$. Thus, when $\nu=\mu'$ with $r(\nu)=r(\mu')=u_i$, we have
    \begin{equation*}
    \begin{split}
		(T_\mu T_\nu^*)(T_{\mu'}T_{\nu'}^*)&=  \left(\sum_{\lambda \in \Lambda_i}s_\lambda s_\mu  s_\nu^*s_\lambda^*\right) \left( \sum_{\lambda' \in \Lambda_i}s_{\lambda'} s_{\mu'}  s_{\nu'}^*s_{\lambda'}^*\right) \\
        &=  \left(\sum_{\lambda \in \Lambda_i}(s_\lambda s_\mu  s_\nu^*s_\lambda^*)(s_{\lambda} s_{\mu'}  s_{\nu'}^*s_{\lambda}^*)\right) \\
		&= \sum_{\lambda \in \Lambda_i}s_\lambda s_\mu  s_{\nu'}^*s_\lambda^*\\
		&= T_\mu T_{\nu'}^*
    \end{split}
	\end{equation*}
    and this product is zero if $\nu \neq \mu'$. By Lemma~\ref{unitalmatrix}, we have that the sum of the diagonal matrix units $T_\mu T_\mu^*$ is $p_v$, the unit of $p_vL_\bC(F')p_v$. Let $N_{i,j}$ denote the number of paths from $v_j$ to $u_i$ such that $u_i$ is the only vertex of $F$ on each path. We observe that
	$$C^*(\mathbb{M}) \cong\bigoplus_{(i,j)=(1,1)}^{(i,j)=(n,m)}M_{N_{i,j}}$$
    Note that, since $(F,v) \subset (F',v)$ is nondegenerate, we have that $N_{i,j}\neq 1$ for all $i$ and $j$. Furthermore, at least one $N_{i,j}$ is not zero by Lemma~\ref{Nijnot0}. Finally, suppose we are given paths $\gamma$ and $\eta$ in $F$ with $r(\gamma)=r(\eta)=v$ and $s(\gamma)=s(\eta)=u_k$. Let $\beta_k$ be the set of paths from $u_i$ to $u_k$. We see that
    \begin{equation*}
    \begin{split}
        T_\mu T_\nu^* s_\gamma s_\eta^* &=\left(\sum_{\lambda \in \Lambda_i}s_\lambda s_\mu s_\nu^* s_\lambda^*\right)s_\gamma s_\eta^*\\
        &=\left(\sum_{\alpha \in \beta_k}s_\gamma s_\alpha s_\mu s_\nu^* s_\alpha^*s_\gamma^*\right)s_\gamma s_\eta^*\\
        &=\sum_{\alpha \in \beta_k}s_\gamma s_\alpha s_\mu s_\nu^* s_\alpha^*s_\eta^*\\
        &=\sum_{\alpha \in \beta_k}(s_\gamma s_\eta^*) s_\eta s_\alpha s_\mu s_\nu^* s_\alpha^*s_\eta^*\\
        &=s_\gamma s_\eta^*\left(\sum_{\alpha \in \beta_k} s_\eta s_\alpha s_\mu s_\nu^* s_\alpha^*s_\eta^*\right)\\
        &= s_\gamma s_\eta^* \left(\sum_{\lambda \in \Lambda_i}s_\lambda s_\mu s_\nu^* s_\lambda^*\right)\\
        &= s_\gamma s_\eta^* T_\mu T_\nu^* 
    \end{split}
    \end{equation*}
    As every element of $p_vL_\bC(F)p_v$ may be written as a linear combination of elements of the form $s_\gamma s_\eta^*$ as above by repeated applications of Definition~\ref{CKdef}\ref{CK2}, we conclude that $C^*(\mathbb{M})$ commutes with $p_vL_\bC(F)p_v$. Finally, by writing $N_{i,j}=2x_{i,j}+3y_{i,j}$ for nonnegative integers $x_{i,j}$ and $y_{i,j}$, we obtain a unital $\ast$-homomorphism $\phi\colon M_2 \oplus M_3\to C^*(\mathbb{M})\subset p_vL_\bC(F')p_v$ by inserting $x_{i,j}$ copies of $M_2$ and $y_{i,j}$ copies of $M_3$ as block diagonal matrices inside $M_{N_{i,j}}$.
\end{proof}

We finally have all of the necessary tools in hand to prove case \ref{A1} of Theorem~\ref{thmA}.

\begin{proof}[Proof of Theorem~\ref{thmA}, case \ref{A1}]
    It is clear that \ref{thmAsubq} follows from \ref{thmAjiangsu} since $\cZ$-stability is preserved by taking ideals and quotients by Theorem~\ref{dstabperm}\ref{dstabpermideal}, and \ref{thmAsubq} implies \ref{thmAdetours} was proven in Lemma~\ref{detournec}. Thus we only need to show that the existence of distinct detours implies $\cZ$-stability. By Corollary~\ref{graphcorners}, it is enough to show that $p_vC^*(E)p_v$ is $\cZ$-stable for all $v \in E^0$. $C^*(E)$ is separable since $E$ is countable, so we may fix a countable dense sequence $\{x_1,x_2,\dots\} \subset p_vC^*(E)p_v$. Define a sequence of finite subsets by first defining $\cF_1=\{x_1\}$ and then proceeding inductively by defining $\cF_{n+1}=\cF_n \cup \{x_{n+1}\}$. For each $n \in \bN$, we may find an entrance-complete finite in-tree $(F_n,v) \subset E$ such that the distance between $\cF_n$ and $p_vL_\bC(F_n)p_v$ is less than $\frac{1}{2n}$. By applying Lemma~\ref{nondegenincl}, we find an entrance-complete finite in-tree $(F_n',v)$ such that $(F_n,v) \subset (F_n',v)$ is a nondegenerate inclusion for each $n \in \bN$. Applying Lemma~\ref{centralmatrix} to each inclusion $(F_n,v) \subset (F_n',v)$ produces a sequence of unital $\ast$-homomorphisms $\phi_n\colon M_2 \oplus M_3 \to p_vL_\bC(F_n')p_v\subset p_vC^*(E)p_v$ such that the range of $\phi_n$ commutes with $p_vL_\bC(F_n)p_v$. Since any element of $\cF_n$ is within $\frac{1}{2n}$ of $p_vL_\bC(F_n)p_v$, it follows that the range of $\phi_n$ commutes with $\cF_n$ up to a tolerance of $\frac{1}{n}$.
    \par Now fix a free ultrafilter $\omega$ on $\bN$, and define a $\ast$-homomorphism
    $$\Phi\colon M_2 \oplus M_3 \to (p_vC^*(E)p_v)_\omega \cap (p_vC^*(E)p_v)'$$
    $$\Phi(y)=(\phi_n(y))_{n \in \bN}$$
    for each $y \in M_2 \oplus M_3$. Note that the elements $y_1=(e_{11},e_{11})$ and $y_2=(e_{22}, e_{22}+e_{33})$ are orthogonal in $M_2 \oplus M_3$. Furthermore, they are both full in $M_2 \oplus M_3$. Since $\Phi$ is a unital $\ast$-homomorphism, it follows that $\Phi(y_1)$ and $\Phi(y_2)$ are full, orthogonal elements in $(p_vC^*(E)p_v)_\omega \cap (p_vC^*(E)p_v)'$, and so it follows that $p_vC^*(E)p_v$ is $\cZ$-stable by \cite[Theorem~1.2]{RT17}.
\end{proof}

We obtain the known-to-experts characterization of $\cZ$-stability for unital AF-algebras as a corollary to Theorem~\ref{thmA}, case \ref{A1}.

\begin{cor}
    Let $A$ be a separable, unital AF-algebra. Then $A \otimes \cZ\cong A$ if and only if $A$ has no elementary subquotients.
\end{cor}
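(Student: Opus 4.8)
The plan is to deduce this from Theorem~\ref{thmA}, case~\ref{A1}, by realizing $A$ up to stable isomorphism as the $C^*$-algebra of a graph with no cycles and then transporting the two relevant properties -- having no elementary subquotients and being $\cZ$-stable -- across that stable isomorphism. The bridge for elementary subquotients is exactly Lemma~\ref{stabsubquo}, while the bridge for $\cZ$-stability is the invariance of $\cD$-stability under stable isomorphism (\cite[Corollary~3.2]{TW07}). This also explains why Lemma~\ref{stabsubquo} is placed immediately before the statement.

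For the easy direction, suppose $A \otimes \cZ \cong A$; this argument needs no AF hypothesis. If $A$ had an elementary subquotient $I/J \cong \cK(\cH)$, then $I$ would be $\cZ$-stable as an ideal of a $\cZ$-stable algebra, and hence so would its quotient $I/J \cong \cK(\cH)$, by the permanence properties in \cite[Corollaries~3.1-3.3]{TW07}. But any rank-one projection $e \in \cK(\cH)$ gives a hereditary subalgebra $e\cK(\cH)e \cong \bC$, and $\cZ$-stability passes to hereditary subalgebras; since $\bC$ is plainly not $\cZ$-stable, this is a contradiction. Thus $A$ has no elementary subquotients.

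For the reverse (and main) direction, assume $A$ has no elementary subquotients. By the standard correspondence between Bratteli diagrams and directed graphs -- Drinen's realization of AF-algebras as graph algebras -- there is a countable, row-finite graph $E$ with no cycles that is stably isomorphic to $A$: one reads a Bratteli diagram for $A$ off its defining inductive system and views it as a graph, whose graph algebra is AF with the same ordered $K_0$-group and is therefore stably isomorphic to $A$ by Elliott's classification. Then $A \otimes \cK \cong C^*(E) \otimes \cK$. Applying Lemma~\ref{stabsubquo} twice -- once to $A$ and once to $C^*(E)$ -- the hypothesis that $A$ has no elementary subquotients transfers first to $A \otimes \cK$, then across the isomorphism to $C^*(E) \otimes \cK$, and finally down to $C^*(E)$. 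Since $E$ has no cycles, Theorem~\ref{thmA}, case~\ref{A1}, yields that $C^*(E)$ is $\cZ$-stable, and invariance of $\cZ$-stability under stable isomorphism gives that $A$ is $\cZ$-stable, i.e.\ $A \otimes \cZ \cong A$.

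The main obstacle is the realization step: one must produce a no-cycle graph $E$ stably isomorphic to $A$ with the finiteness needed to invoke Theorem~\ref{thmA}, namely countably many vertices and row-finiteness. This is exactly what a Bratteli diagram supplies, since it has finitely many vertices at each level and finite edge multiplicities, so the associated graph is automatically countable, row-finite, and acyclic; the only care required is to match the ordered $K_0$-data so that Elliott's theorem produces an actual stable isomorphism rather than mere agreement of invariants. Everything after this reduction is formal, being an application of Lemma~\ref{stabsubquo} together with the stable-isomorphism invariance of $\cZ$-stability.
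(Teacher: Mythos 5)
Your proposal is correct and follows essentially the same route as the paper: realize $A$ up to stable isomorphism as $C^*(E)$ for a Bratteli diagram $E$ (countable, row-finite, no cycles), transfer the absence of elementary subquotients via Lemma~\ref{stabsubquo}, apply Theorem~\ref{thmA}, case~\ref{A1}, and pull $\cZ$-stability back through \cite[Corollary~3.2]{TW07}. The only difference is in how the stable isomorphism $A \otimes \cK \cong C^*(E) \otimes \cK$ is justified: you route it through ordered $K_0$ and Elliott's classification, whereas the paper gets it more directly from the full-corner realization \cite[Proposition~2.12]{R05} together with Brown's theorem \cite[Theorem~2.8]{B77} -- a difference of citation, not of approach.
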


\begin{proof}
    Only the reverse direction has meaningful content, as the forward direction is clear. Given a unital AF-algebra $A$, by \cite[Proposition~2.12]{R05} and \cite[Theorem~2.8]{B77}, $A \otimes \cK \cong C^*(E)\otimes \cK$, where $E$ is a Bratteli diagram for $A$. Since $A$ does not have any elementary subquotients, neither does $C^*(E)$ by Lemma~\ref{stabsubquo}. From Theorem~\ref{thmA}, case \ref{A1}, we have $C^*(E)\otimes \cZ \cong C^*(E)$, and hence $A$ is $\cZ$-stable by Theorem~\ref{dstabperm}\ref{dstabpermstable}.
\end{proof}

We may be curious about the assumption of row-finiteness for our graphs. In the case where our graphs do not have cycles, this is a necessary condition for $\cZ$-stability.

\begin{cor}\label{zstabAF}
	Let $E$ be a countable graph with no cycles. If $C^*(E)\cong C^*(E)\otimes \cZ$, then $E$ is row-finite.
\end{cor}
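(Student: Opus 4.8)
The plan is to prove the contrapositive: if $E$ is not row-finite, then $C^*(E)$ is not $\cZ$-stable. The main tool is to pass to a Drinen--Tomforde desingularization $(F,M)$ of $E$, where $F$ is a countable, row-finite graph and $C^*(E)$ is isomorphic to a full corner of $C^*(F)$. Since a full corner is stably isomorphic to the ambient algebra by \cite[Theorem~2.8]{B77}, and $\cZ$-stability passes both to hereditary subalgebras and across stable isomorphisms by \cite[Corollaries~3.1-2]{TW07}, we get that $C^*(E)$ is $\cZ$-stable if and only if $C^*(F)$ is. The first step is to observe that desingularization cannot create cycles: the construction only attaches tails and redirects edges, and any cycle in $F$ would collapse to a cycle in $E$; since $E$ has no cycles, neither does $F$. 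Thus $F$ is a countable, row-finite graph with no cycles, and Theorem~\ref{thmA}, case~\ref{A1} applies, giving that $C^*(F)$ is $\cZ$-stable if and only if $F$ has distinct detours.

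It therefore suffices to exhibit a single simple path in $F^{\leq\infty}$ with no distinct detour. Since $E$ is not row-finite, fix a vertex $v$ with $r^{-1}(v)$ infinite. In forming $F$, the desingularization attaches to $v$ an infinite tail $T = \cdots \to w_2 \to w_1 \to w_0 = v$, redirecting the infinitely many edges into $v$ so that the redirected edges land on the tail vertices $w_j$ with $j \geq 1$, while $w_0 = v$ receives only the tail edge from $w_1$ and retains its original outgoing edges. This tail is a simple backward-infinite path, hence an element of $F^{\leq\infty}$, and I claim it has no distinct detour. The key structural point, coming directly from the definition of a collapsible path, is that the tail's only exit occurs at $w_0 = v$, and that the only edges entering $T$ from outside are the redirected edges into the $w_j$ with $j \geq 1$.

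I would then argue as follows. Any path $\nu$ with $s(\nu)$ on $T$ must, following edge orientations, travel down the tail toward $w_0$, since the tail edges are the only edges leaving the vertices $w_j$ with $j \geq 1$. Hence the only way for $\nu$ to use an edge off $T$ is to exit at $w_0 = v$ and later re-enter $T$ through one of the redirected edges into some $w_j$. Collapsing the tail back to the single vertex $v$ sends this excursion to a closed path at $v$ in $E$ that uses at least the exit edge and the (un-redirected) edge back into $v$ --- that is, a cycle in $E$. Since $E$ has no cycles, no such distinct detour exists (and if $v$ is a sink, $T$ has no exit at all, so the conclusion is immediate). Therefore $F$ does not have distinct detours, so $C^*(F)$, and hence $C^*(E)$, is not $\cZ$-stable. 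The main obstacle is this last step: carefully verifying, using the collapsible-path conditions, that every candidate distinct detour of $T$ must exit at $v$ and return through a redirected edge, and that the resulting collapsed excursion is genuinely a nonempty cycle in $E$ rather than a degenerate path.
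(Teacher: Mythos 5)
Your proposal is correct and takes essentially the same route as the paper: pass to a Drinen--Tomforde desingularization $(F,M)$, note that the attached tail is a simple path in $F^{\leq\infty}$, use the collapsible-path conditions (exits only at the range vertex) to show any distinct detour would produce a nonempty closed path --- hence a cycle --- contradicting acyclicity, and conclude via Theorem~\ref{thmA}, case~\ref{A1}, and invariance of $\cZ$-stability under stable isomorphism. The only cosmetic differences are that you collapse the offending detour to a cycle in $E$ and verify directly that desingularization creates no cycles, whereas the paper exhibits the cycle in $F$ itself and rules it out because $C^*(F)$ is AF.
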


\begin{rmk}
	Corollary~\ref{zstabAF} fails when the graph has cycles. For instance, the graph $F$ with a single vertex and countably infinitely many edges is not row-finite, but $C^*(F)\cong \cO_\infty$, which is $\cZ$-stable by \cite[Theorem~5]{JS99}.
\end{rmk}

\begin{proof}[Proof of Corollary~\ref{zstabAF}]
	We prove the contrapositive. Suppose that $E$ is not row-finite, and let $(F,M)$ be a Drinen--Tomforde desingularization of $E$. Let $\mu=\mu_1\mu_2\cdots$ be a collapsible path in $M$. Note that $\mu$ is a simple path in $F^{\leq \infty}$; we claim that it does not have a distinct detour. Any distinct detour $\nu$ of $\mu$ contains an edge not on $\mu$, and $r(\mu)$ is the only exit of $\mu$. Thus, $\nu$ must pass through $r(\mu)$, so we may as well choose $\nu$ so that $s(\nu)=r(\mu)$. We know $r(\nu)$ lies on $\mu$, so let $s(\mu_n)=r(\nu)$. Then the path $\gamma=\nu\mu_1\mu_2\cdots\mu_n$ is a cycle with $s(\gamma)=r(\gamma)=r(\nu)$. However, $C^*(F)$ is AF if and only if $F$ does not have cycles. Because $C^*(F)$ and $C^*(E)$ are stably isomorphic, and $C^*(E)$ is AF because $E$ does not have cycles, we conclude that $C^*(F)$ is AF as well. Therefore, $F$ cannot have any cycles, and so $\mu$ cannot have a distinct detour in $F$. It follows that $C^*(F)$ is not $\cZ$-stable by Theorem~\ref{thmA}, case~\ref{A1}. We conclude that $C^*(E)$ is not $\cZ$-stable by Theorem~\ref{dstabperm}\ref{dstabpermstable}, as $C^*(E)\otimes \cK \cong C^*(F)\otimes \cK$ by \cite[Theorem~2.11]{DT05} and \cite[Theorem~2.8]{B77}.
\end{proof}

\section{Graph Algebras with Finitely Many Ideals}\label{finideals}
Outside of the AF case, we may use Theorem~\ref{thmA}, case \ref{A1} to determine when graph algebras with finitely many ideals are $\cZ$-stable. Note that graphs giving rise to such algebras must satisfy Condition~(K), as a subquotient stably isomorphic to $C(T)$ has infinitely many ideals. We first need the following lemma on the permanence of detours.

\begin{lem}\label{detourshereditary}
    Let $E$ be a directed graph, and suppose $H\subset E^0$ is a hereditary, saturated subset. Then $E$ has distinct detours if and only if the graphs $E_{H}$ and $E \setminus H$ have distinct detours.
\end{lem}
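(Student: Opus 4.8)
The plan is to prove both implications in contrapositive form, where the negation of ``has distinct detours'' is witnessed by a \emph{bad path}: a simple $\mu \in G^{\leq\infty}$ all of whose detours consist only of edges already lying on $\mu$. Throughout I will lean on two consequences of $H$ being hereditary. First, if $\nu \in E^*$ has $s(\nu) \notin H$, then no vertex of $\nu$ lies in $H$ (otherwise an interior vertex in $H$ would, by hereditariness, force $s(\nu) \in H$); dually, no edge runs from $E^0 \setminus H$ into $H$. Second, along any simple path the vertices lying in $H$ form a source-side segment, so a path meeting both $H$ and its complement crosses from $H$ to $E^0 \setminus H$ exactly once and never back. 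I will also use that a vertex $v \in H$ is a source of $E_H$ if and only if it is a source of $E$ (every edge into $v$ already has range $v \in H$, hence lies in $E_H$), and that every source of $E$ is a source of both $E_H$ and $E \setminus H$.

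For the implication that distinct detours for $E$ force them for $E_H$ and $E \setminus H$, I argue that a bad path in either subgraph lifts to one in $E$. If $\mu$ is a bad path in $E_H$, its vertices lie in $H$, so $\mu \in E^{\leq\infty}$ (an infinite backward path stays infinite, and a finite one starts at a source of $E_H$, hence of $E$). Any distinct detour of $\mu$ in $E$ has both endpoints in $H$, so by hereditariness it lies entirely in $E_H$, giving a distinct detour there; this contradiction shows $\mu$ is bad in $E$. The case of a bad path $\mu$ in $E \setminus H$ is the same in spirit, with one genuine subtlety that I expect to be the crux of the whole lemma: a source of $E \setminus H$ need not be a source of $E$, since $s(\mu)$ may receive edges originating in $H$. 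This is precisely where \emph{saturation} enters. If $s(\mu)$ is a source of $E \setminus H$ but not of $E$, then every edge into $s(\mu)$ has source in $H$ and $r^{-1}(s(\mu)) \neq \emptyset$, so saturation forces $s(\mu) \in H$, contradicting $s(\mu) \in E^0 \setminus H$. Hence $s(\mu)$ is in fact a source of $E$, whence $\mu \in E^{\leq\infty}$, and the hereditary argument again shows any distinct detour of $\mu$ in $E$ would already be one in $E \setminus H$.

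For the converse, given a bad path $\mu$ in $E$ I produce one in $E_H$ or $E \setminus H$ according to where the vertices of $\mu$ lie. If all vertices of $\mu$ lie in $H$ (respectively in $E^0 \setminus H$), then $\mu$ is already a path in $E_H^{\leq\infty}$ (resp. $(E \setminus H)^{\leq\infty}$), and since any distinct detour there is \emph{a fortiori} a distinct detour in $E$, the path $\mu$ is bad in that subgraph. In the remaining mixed case, the source-side segment observation shows that the initial (source-side) segment $\mu_H$ of $\mu$ lies in $H$, the remaining tail lies in $E^0 \setminus H$, and there is a single crossing edge whose source is in $H$ and whose range is not. I will check that $\mu_H$ is bad in $E_H$: it lies in $E_H^{\leq\infty}$ (being infinite, or finite starting at the source $s(\mu)$ of $E$), and a distinct detour of $\mu_H$ in $E_H$ — whose edges, having range in $H$, cannot coincide with the crossing edge or with any tail edge — would be a distinct detour of $\mu$ in $E$, a contradiction.

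The routine work left is bookkeeping: confirming the source correspondences, verifying that the degenerate length-zero path (a lone source vertex, which has no distinct detour) is handled uniformly, and checking the edge-membership comparisons in the mixed case. The single conceptual hurdle is the forward $E \setminus H$ subcase, and I have isolated exactly the hypothesis — saturation — that removes it; hereditariness alone drives every other step.
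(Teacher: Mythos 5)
Your proof is correct and follows essentially the same route as the paper's: sources of $E_H$ (by definition) and of $E\setminus H$ (by saturation) are sources of $E$, hereditariness forces a detour with range in $H$ to lie in $E_H$ and a detour with source outside $H$ to lie in $E\setminus H$, and for the converse one restricts a path of $E$ to its $H$-segment or works entirely outside $H$. Your contrapositive packaging changes nothing of substance, and your mixed-case bookkeeping --- taking the \emph{maximal} source-side $H$-segment and noting that an $E_H$-detour's edges have range in $H$, so cannot coincide with the crossing or tail edges of $\mu$ --- is in fact slightly more careful than the paper's corresponding step, which picks an arbitrary vertex $r(\mu_i)\in H$ and tacitly needs $i$ minimal for the detour's distinctness to transfer from the tail to $\mu$.
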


\begin{proof}
    We begin with the forward direction and first show that $E_H$ has distinct detours. As $H$ is hereditary, a source in $E_H$ must also be a source in $E$. Let $\mu \in (E_H)^{\leq \infty}$ be a simple path. If $\mu$ is a finite path, then $s(\mu)$ is a source in $E_H$ and therefore a source in $E$. If $E$ is an infinite path in $E_H$, then it is also infinite in $E$. In either case, $\mu$ is a simple path in $E^{\leq\infty}$. By assumption, we may find a distinct detour $\nu \in E^*$ for $\mu$. Because $r(\nu)$ lies on $\mu$ and hence is in $H$, we conclude that $\nu$ lies in $E_H$ since $H$ is hereditary. 
    \par To prove that $E\setminus H$ has distinct detours, consider a simple path $\gamma$ in $(E\setminus H)^{\leq\infty}$. Because $H$ is saturated, every source of $E\setminus H$ is also a source of $E$, so $\gamma$ belongs to $E^{\leq \infty}$ by the same argument as the preceding paragraph. Let $\lambda \in E^*$ be a distinct detour for $\gamma$. If any vertex of $\lambda$ belonged to $H$, then so too would $s(\lambda)$ since $H$ is hereditary. But $s(\lambda)$ is not in $H$ since it lies on $\gamma$, a path in $E\setminus H$. Thus $\lambda$ lies in $E \setminus H$.
    \par For the reverse direction, suppose $E_H$ and $E\setminus H$ have distinct detours. Let $\mu=\mu_1\mu_2\cdots$ (terminating if $\abs{\mu}<\infty$) be a simple path in $E^{\leq\infty}$. If $\mu$ contains a vertex of $H$, say $r(\mu_i)$, then $\mu_i\mu_{i+1}\cdots$ is a path in $(E_H)^{\leq\infty}$ since $H$ is hereditary and sources of $E$ are sources of $E_H$ or $E\setminus H$. Then we may find a distinct detour $\nu \in (E_H)^*$ for $\mu$, which is still a distinct detour for $\mu$ in $E$. On the other hand, if $\mu$ does not contain a vertex of $H$, then $\mu$ is a path in $(E\setminus H)^{\leq\infty}$. Therefore, there is a distinct detour $\lambda \in (E\setminus H)^*$ for $\mu$, which is again a distinct detour for $\mu$ in $E$. In either case, $\mu$ has a distinct detour in $E$, so $E$ has distinct detours.
\end{proof}

With this additional ingredient, we are ready to prove case \ref{A2} of Theorem~\ref{thmA}.
\begin{proof}[Proof of Theorem~\ref{thmA}, case \ref{A2}]
	As was the case in the proof of case \ref{A1}, we have \ref{thmAjiangsu} $\implies$ \ref{thmAsubq} $\implies$ \ref{thmAdetours}. Thus we again only need to show that \ref{thmAdetours} implies \ref{thmAjiangsu}. First, construct a maximal chain of hereditary, saturated subsets $\emptyset = H_0 \subset H_1 \subset \cdots \subset H_n = E^0$, which exists since $C^*(E)$ has finitely many ideals. Then the ideal $I_{H_1}$ is simple and stably isomorphic to $C^*(E_{H_1})$. Similarly, each quotient $C^*(E_{H_i}\setminus H_{i-1})$ is simple. It follows that $C^*(E_{H_1})$ and $C^*(E_{H_i}\setminus H_{i-1})$ are each either AF or purely infinite by \cite[Remark~5.6]{BPRS00}. When they are AF, then they are $\cZ$-stable by Theorem~\ref{thmA}, case~\ref{A1}, and Lemma~\ref{detourshereditary}. On the other hand, when they are purely infinite, then they are nuclear by \cite[Proposition~2.6]{KP99} and hence $\cO_\infty$-stable by \cite[Theorem~3.15]{KP00} (c.f. \cite[Theorem~7.2.6]{RS02}), from which it follows that they are $\cZ$-stable. Since $\cZ$-stability is preserved by stable isomorphism and by taking extensions by parts \ref{dstabpermstable} and \ref{dstabpermideal} of Theorem~\ref{dstabperm}, we may inductively extend along the chain $0 \triangleleft I_{H_1} \triangleleft I_{H_2} \triangleleft \cdots \triangleleft I_{H_n}=C^*(E)$ to conclude that $C^*(E)$ is $\cZ$-stable.
\end{proof}

We will now examine $\cZ$-stability in the case of finite graphs. Recall that by Remark~\ref{detournosources}, any graph with distinct detours is source-free. On the other hand, if $E$ is a finite graph with no sources, then $E^{\leq\infty}$ does not contain any simple paths (since $\abs{E^0}<\infty$), so $E$ trivially has distinct detours. Due to the restrictive nature of such graphs, $\cZ$-stability is a very strong condition on these graph algebras, being equivalent to $\cO_\infty$-stability.

\begin{proof}[Proof of Theorem~\ref{thmB}]
    First, recall that \ref{thmBgraph} implies \ref{thmBpureinf} by \cite[Section~3, Theorem~C]{FC23}. Now, observe that \ref{thmBpureinf} implies \ref{thmBjiangsu} by \cite[Theorem~5]{JS99} and \ref{thmBjiangsu} implies \ref{thmBsubq} by Theorem~\ref{dstabperm}\ref{dstabpermideal}.  It remains to show that \ref{thmBsubq} implies \ref{thmBgraph}. If $v$ has a source, then the ideal $I_v$ generated by $p_v$ is stably isomorphic to $\bC p_v \cong p_vI_vp_v$ by \cite[Theorem~2.8]{B77}, and so $I_v$ is an elementary $C^*$-algebra by Lemma~\ref{stabsubquo}. On the other hand, if $E$ fails to have Condition~(K), then $C^*(E)$ has a subquotient stably isomorphic to $C(T)$, which then yields an elementary $C^*$-algebra as a quotient. Thus, \ref{thmBsubq} implies \ref{thmBgraph}, completing the proof.
\end{proof}

\section{Pure Graph \texorpdfstring{$C^*$}{C*}-Algebras}\label{puregraph}

In proving Theorem~\ref{thmA}, case \ref{A1}, we have completed most of the work necessary to prove Theorem~\ref{thmC}. We therefore immediately proceed to its proof, showing that $E$ having Condition (K) and distinct detours is equivalent to $C^*(E)$ being pure. 
\begin{proof}[Proof of Theorem~\ref{thmC}]
    We begin by proving the equivalence of \ref{thmCgraph} and \ref{thmCsubq}. Suppose $E$ has Condition (K) and distinct detours, and let $B$ be a simple subquotient of $C^*(E)$. If $B$ is not AF, it cannot be elementary, so we may assume that $B$ is an AF algebra. Since $E$ has Condition (K), ideals and quotients are (stably) isomorphic to the graph algebras of particular subgraphs of $E$. Suppose $H\subseteq E^0$ is a hereditary, saturated subset so that $B\cong I_H/J$ for some ideal $J\trianglelefteq I_H$. By Lemma~\ref{detourshereditary}, the graph $E_H$ has distinct detours as well as Condition~(K) by \cite[Section~3, Lemma~A]{FC23}. By \cite[Theorem~5.1]{RT14}, $I_H$ is isomorphic to $C^*(\clo{E}_H)$, where $\clo{E}_H$ is obtained by adding sinks to the graph $E_H$. This operation clearly preserves the presence of distinct detours and Condition~(K). It follows that the quotient $I_H/J$ is isomorphic to $C^*(\clo{E}_H\setminus H')$ for some hereditary saturated subset $H' \subseteq \clo{E}_H^0$. In particular, $\clo{E}_H\setminus H'$ is a graph with distinct detours by Lemma~\ref{detourshereditary}, and $C^*(\clo{E}_H\setminus H')$ is AF since it is isomorphic to $B$. It follows that $B$ is a $\cZ$-stable algebra by Theorem~\ref{thmA}, case \ref{A1}. In particular, $B$ is not an elementary $C^*$-algebra. For the reverse direction, suppose that $C^*(E)$ has no elementary subquotients. By Lemma~\ref{detournec}, it follows that $E$ must have distinct detours. Similarly, if $E$ does not have Condition (K), a standard construction produces a subquotient stably isomorphic to $C(\bT)$. This subsequently yields an elementary subquotient, so $E$ must have Condition (K).
    \par We next show that \ref{thmCgraph} and \ref{thmCsubq} together imply \ref{thmCpure}. Since $C^*(E)$ has no elementary subquotients, it is nowhere scattered by \cite[Theorem~3.1]{TV24}. Since $E$ has Condition (K), $C^*(E)$ has real rank zero by \cite[Theorem~4.1]{JP02}. Thus $C^*(E)$ has the Global Glimm Property by \cite[Proposition~7.4]{TV23}. Combining \cite[Theorem~A]{FC23} and \cite[Theorem~6.5]{APTV24} proves $C^*(E)$ is pure. Conversely, if $C^*(E)$ is pure, $C^*(E)$ has no elementary subquotients by \cite[Proposition~5.2]{APTV24} and \cite[Theorem~7.1]{TV23}.
\end{proof}

This leads us to the following conjecture about $\cZ$-stability for graph algebras.

\begin{conj}\label{zconj}
    Let $E$ be a row-finite graph. Then the following are equivalent:
    \begin{enumerate}
        \item $E$ has Condition (K) and distinct detours.
        \item $C^*(E)$ has no elementary subquotients.
        \item $C^*(E)$ is pure.
        \item $C^*(E)$ is $\cZ$-stable.
    \end{enumerate}
\end{conj}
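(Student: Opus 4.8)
The plan is to lean on Theorem~\ref{thmC}, which already establishes the equivalence of (ii), (iii), and (iv), so that the entire conjecture reduces to a single implication: that Condition~(K) together with distinct detours is \emph{sufficient} for $\cZ$-stability, i.e.\ (iv) $\implies$ (i). The necessity directions are routine and I would dispatch them first. That $\cZ$-stability forbids elementary subquotients follows because $\cZ$-stability passes to ideals and quotients by \cite[Corollary~3.3]{TW07}, while $\cK(\cH)$ is never $\cZ$-stable (it contains $\bC$ as a hereditary subalgebra, and $\bC \otimes \cZ \cong \cZ \ncong \bC$); hence (i) $\implies$ (ii). Lemma~\ref{detournec} together with the necessity of Condition~(K) noted in the introduction gives (i) $\implies$ (iv), and purity of $\cZ$-stable algebras (\cite[Proposition~5.2]{APTV24}) gives (i) $\implies$ (iii). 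Everything therefore hinges on (iv) $\implies$ (i).

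For the sufficiency direction I would follow the template of the proof of Theorem~\ref{thmA}, case~\ref{A1}. By Corollary~\ref{graphcorners} it suffices to prove each unital corner $p_v C^*(E) p_v$ is $\cZ$-stable. Since $E$ has Condition~(K), $C^*(E)$ has nuclear dimension at most two by \cite[Theorem~A]{FC23}, and this bound passes to the hereditary subalgebra $p_v C^*(E) p_v$; so by \cite[Theorem~1.2]{RT17} it is enough to produce two full, orthogonal elements in $(p_v C^*(E) p_v)_\omega \cap (p_v C^*(E) p_v)'$. In the acyclic case these came from the unital embeddings $\phi_n \colon M_2 \oplus M_3 \to p_v L_\bC(F_n')p_v$ of Lemma~\ref{centralmatrix}, built from nondegenerate inclusions of in-trees (Lemma~\ref{nondegenincl}). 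The goal is to construct an analogous sequence of approximately central, full, orthogonal pairs directly from the combinatorics of $E$, now permitting cycles.

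The main obstacle is precisely the presence of cycles. The in-tree machinery of Lemmas~\ref{nondegenincl} and~\ref{centralmatrix} is inherently acyclic: the degeneracy argument forces $k < l$ along a detour (impossible on a cycle), and the matrix-unit computation uses the \emph{unique} factorization $\gamma = \lambda\mu$ of a path through a tree, which fails once a vertex lies on a cycle. A cycle satisfying Condition~(K) instead contributes genuinely infinite projections and $\cO_\infty$-type behaviour, so one expects the corner over such a vertex to be locally purely infinite rather than AF; the construction must therefore fuse the AF-type central matrix units coming from distinct detours with the $\cO_\infty$-absorbing structure coming from the two return paths guaranteed by Condition~(K). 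It is worth emphasizing that Theorem~\ref{thmA} already settles the two extreme regimes --- case~\ref{A1} handles arbitrarily many ideals when $E$ is acyclic, while case~\ref{A2} handles arbitrary cycles when $C^*(E)$ has finitely many ideals --- so the genuinely new difficulty is the \emph{simultaneous} occurrence of cycles and infinitely many ideals.

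An alternative route, avoiding the explicit construction, is to combine purity (Theorem~\ref{thmC}), finite nuclear dimension (\cite[Theorem~A]{FC23}), and the absence of elementary subquotients, and then invoke the Generalized Toms--Winter Conjecture (Conjecture~\ref{GTWC}); since that statement is open in this generality, (iv) $\implies$ (i) is in effect equivalent to establishing it for this class. Concretely I would attempt a transfinite composition series $0 = I_0 \triangleleft I_1 \triangleleft \cdots$ exhausting $C^*(E)$ through its (gauge-invariant, by Condition~(K)) ideal lattice, arrange each simple successive subquotient to be AF or purely infinite --- hence $\cZ$-stable by Theorem~\ref{thmA}, case~\ref{A1}, and \cite[Theorem~3.15]{KP00} respectively --- and then pass $\cZ$-stability through extensions (\cite[Theorem~4.3]{TW07}) and inductive limits (\cite[Corollary~3.4]{TW07}). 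The delicate points there are continuity of the chain at limit ordinals and the existence of a maximal well-ordered chain with \emph{simple} successive quotients, structural input that is automatic for finitely many ideals but not in general; this is exactly where I expect the real work, and the conjecture's difficulty, to lie.
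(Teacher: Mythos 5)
The statement you were asked to prove is Conjecture~\ref{zconj}; the paper offers no proof of it and explicitly leaves it open, so there is no paper proof to compare against. Judged on its own terms, the parts of your proposal that are actually carried out are correct and coincide with what the paper establishes: Theorem~\ref{thmC} gives the equivalence of (ii), (iii), and (iv); the implications (i)~$\implies$~(ii), (iii), (iv) follow, as you say, from \cite[Corollary~3.3]{TW07} together with the non-$\cZ$-stability of $\cK(\cH)$, from Lemma~\ref{detournec} plus the $C(\bT)$-obstruction for Condition~(K), and from \cite[Proposition~5.2]{APTV24}. Your diagnosis of why the paper's two techniques do not extend is also accurate: the in-tree machinery of Lemmas~\ref{nondegenincl} and~\ref{centralmatrix} is intrinsically acyclic (both the degeneracy argument and the unique factorization of paths through a tree fail in the presence of cycles), and the composition-series argument of Theorem~\ref{thmA}, case~\ref{A2}, requires a chain of ideals with simple successive subquotients, which is automatic for finitely many ideals but need not exist in general.

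The gap is that this is where your proposal stops. The implication (iv)~$\implies$~(i) is identified as the goal, obstacles are described, and two strategies are sketched, but neither is executed; a catalogue of difficulties is not a proof, and the missing step is not a routine detail but the entire open content of the conjecture. Your transfinite composition-series route founders exactly where you predict: a separable $C^*$-algebra with infinitely many ideals may admit no ordinal-indexed composition series with simple successive quotients at all (any well-ordered strictly increasing chain of ideals in a separable algebra is countable, and the primitive ideal space can be densely ordered), and \cite[Corollary~3.4]{TW07} covers only sequential inductive limits, so continuity at limit stages is not free. Your other route would require a genuinely new construction fusing approximately central matrix units with the purely infinite behaviour along cycles, and neither you nor the paper supplies one. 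So the verdict is: your necessity arguments and the equivalence of (ii)--(iv) are correct and agree with the paper's results, but the statement as a whole remains unproven in your proposal, exactly as it does in the paper.
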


\begin{rmk}
	If Conjecture~\ref{zconj} is correct, then we can remove the assumption of row-finiteness by adding the condition that infinitely many edges in $r^{-1}(v)$ lie on cycles in $E$ when $v$ is an infinite receiver. The proof is similar to that of Corollary~\ref{zstabAF}, as a Drinen--Tomforde desingularization of $E$ will fail to have distinct detours if an infinite receiver lies on only finitely many cycles.
\end{rmk}


\end{document}